\newcommand{\im}{\mathrm{Im}}
\newcommand{\dom}{\mathrm{dom}}
\newcommand{\RX}{\ensuremath{\,(-\infty,+\infty]}}
\newcommand{\RR}{\ensuremath{\mathbb R}}
\newcommand{\cA}{\mathcal{A}}
\newtheorem{theorem}{Theorem}[section]
\newtheorem{definition}[theorem]{Definition}
\newtheorem{example}[theorem]{Example}
\theoremstyle{plain}{\theorembodyfont{\rmfamily}}
\newtheorem{proposition}[theorem]{Proposition}
\newtheorem{lemma}[theorem]{Lemma}
\theoremstyle{plain}{\theorembodyfont{\rmfamily}

\theoremstyle{plain}{\theorembodyfont{\rmfamily}

\begin{document}
\title{Optimal Pricing for Optimal Transport}

\author{
Sedi Bartz
\ \ and\ \ \ 
Simeon Reich
\thanks{E-mail addresses: \texttt{bartz@techunix.technion.ac.il} (S. Bartz),\ \texttt{sreich@techunix.technion.ac.il} (S. Reich).}
\\
\small{\textit{Department of Mathematics,
The Technion -- Israel Institute of Technology,
32000 Haifa, Israel.}}
}

\date{January 13, 2014}

\maketitle


\begin{abstract} \noindent
Suppose that $c(x,y)$ is the cost of transporting a unit of mass from $x\in X$ to $y\in Y$ and suppose that a mass distribution $\mu$ on $X$ is transported optimally (so that the total cost of transportation is minimal) to the mass distribution $\nu$ on $Y$. Then, roughly speaking, the Kantorovich duality theorem asserts that there is a price $f(x)$ for a unit of mass sold (say by the producer to the distributor) at $x$ and a price $g(y)$ for a unit of mass sold (say by the distributor to the end consumer) at $y$ such that for any $x\in X$ and $y\in Y$, the price difference $g(y)-f(x)$ is not greater than the cost of transportation $c(x,y)$ and such that there is equality $g(y)-f(x)=c(x,y)$ if indeed a nonzero mass was transported (via the optimal transportation plan) from $x$ to $y$. We consider the following optimal pricing problem: suppose that a new pricing policy is to be determined while keeping a part of the optimal transportation plan fixed and, in addition, some prices at the sources of this part are also kept fixed. From the producers' side, what would then be the highest compatible pricing policy possible? From the consumers' side, what would then be the lowest compatible pricing policy possible?
We have recently introduced and studied settings in $c$-convexity theory which gave rise to families of $c$-convex $c$-antiderivatives, and, in particular, we established the existence of optimal $c$-convex $c$-antiderivatives and explicit constructions of these optimizers were presented. In applications, it has turned out that this is a unifying language for phenomena in analysis which used to be considered quite apart. In the present paper we employ optimal $c$-convex $c$-antiderivatives and conclude that these are natural solutions to the optimal pricing problems mentioned above. This type of problems drew attention in the past and existence results were previously established in the case where $X=Y={\RR}^n$ under various specifications. We solve the above problem for general spaces $X,Y$ and real-valued, lower semicontinuous cost functions $c$. Furthermore, an explicit construction of solutions to the general problem is presented. 
\end{abstract}

\noindent {\bfseries 2010 Mathematics Subject Classification:}
26A16, 26A51, 47H04, 49N15, 49Q20, 52A01, 58E30, 90B06, 91B24.

\noindent {\bfseries Keywords and phrases:} Abstract convexity, $c$-convex function, convex antiderivative, cyclic monotonicity, Kantorovich duality, Lipschitz extension, monopoly, optimal price, optimal transport, principal-agent, subdifferential, transport plan.

\section{Introduction and Preliminaries}
Recall the Monge-Kantorovich minimization problem: let $(X,\mu)$ and $(Y,\nu)$ be two probability spaces. We denote by $\Pi(\mu,\nu)$ the set of all probability measures $\pi$ on the product set $X\times Y$ such that the marginals of $\pi$ are $\mu$ and $\nu$. That is, for all measurable sets $A\subset X$ and $B\subset Y$, we have $\pi(A\times Y)=\mu(A)$ and $\pi(X\times B)=\nu(B)$. Given a function $c:X\times Y\to\RR$, we seek a minimizer of
\begin{equation}
C(\mu,\nu):=\inf_{\begin{array}{c}\pi\in\Pi(\mu,\nu) \end{array}}\int_{X\times Y} c(x,y)d\pi(x,y).
\end{equation}     
The measures $\pi\in\Pi(\mu,\nu)$ are called $transport\ plans$ or \emph{transference\ plans}. The measures $\pi\in\Pi(\mu,\nu)$ achieving the infimum are called $optimal\ transport\ plans$. The classical interpretation of this problem is the problem of minimizing the $total\ cost\ C(\mu,\nu)$ of transporting the mass distribution $\mu$ to the mass distribution $\nu$, where the $cost$ of transporting one unit of mass at the point $x\in X$ to one unit of mass at the point $y\in Y$ is given by the cost function $c(x,y)$. 

Our discussion here is possible because of the role $c$-convexity plays in the theory of optimal transport, more precisely, in the Kantorovich duality theorem. The  theory of $c$-convexity is a main branch of \emph{abstract convex analysis} (or \emph{generalized convexity}) and has attracted more and more attention in recent years. Now this topic is studied continually, both theoretically and from the point of view of applications. A unifying and very detailed treatment of abstract convex analysis and, in particular, of $c$-convexity can be found in \cite{sin}. We now recall the basic definitions and notations of $c$-convexity theory: unless otherwise specified, throughout the paper $X$ and $Y$ are arbitrary  sets and $c:X\times Y\to\RR$ is an arbitrary function. We say that a function $f:X\to\RX$ is proper if $\dom (f):=\{x\in X\ |\ f(x)<\infty\}$ is not empty.

\begin{definition}[c-transform]
Let $X$ and $Y$ be nonempty sets and let $c:X\times Y\to\mathbb{R}$ be a
function. Given
a function $f:X\to[-\infty,\infty]$, its $c$-transform $f^c:Y\to[-\infty,\infty]$ is defined by
\begin{equation}
f^c(y):=\sup_{x\in X}\ c(x,y)-f(x),\ \ \ y\in Y.
\end{equation}
Similarly, the $c$-transform of a function $g:Y\to[-\infty,\infty]$ is the function $g^c:X\to[-\infty,\infty]$ defined by
\begin{equation}
g^c(x):=\sup_{y\in Y}\ c(x,y)-g(y),\ \ \ x\in X.
\end{equation}
\end{definition}

\begin{definition}[c-convexity]\label{cconvexity}
A proper function $f:X\to\RX$ is said to be $c$-convex if there exists a (necessarily proper) function $g:Y\to\RX$ such that $f=g^c$. The set of all $c$-convex functions defined on $X$ is denoted by $\Gamma_c(X)$. The function $(f^c)^c$ is the $c$-convexification of $f$ and is denoted by $f^{cc}$.
\end{definition}

The $c$-transform of a function is also known as its \emph{c-conjugate} function. This generalization of Fenchel's conjugate function from classical convex analysis was introduced and studied by Moreau in \cite{mor}. Sometimes, the function $c$ is allowed to take the values $\pm\infty$; however, in our discussion we focus our attention on the settings in the above definitions. When referring to the theoretical study of $c$-convexity, the function $c$ is called a \emph{coupling function} between $X$ and $Y$, while in the particular application to the study of optimal transport it is the \emph{cost function}. As is often standard in convex analysis, the indicator function of a subset $S$ of $X$ is the function $\iota_S:X\to\RX$ defined by
$$
\iota_{S}(x):=\Big\{\begin{array}{c}
                     0\ \ \ x\in S \\
                     \infty\ \ x\notin S.
                   \end{array}
$$
For example, for every $y\in Y$, the function $c(\cdot,y):X\to\RR$ is $c$-convex
since $c(\cdot,y)=\iota_{\{y\}}^c$. (In fact, the functions $c(\cdot,y),\ y\in Y$, are the ones playing the role linear functionals play in classical convex analysis.)

Clearly, for any function $f:X\to\RX$,
\begin{equation}
c(x,y)\leq f(x)+f^c(y)\ \ \ \mathrm{for\ all}\ x\in X\ \mathrm{and}\ y\in Y,
\end{equation}
which is a generalization of the Young-Fenchel inequality from classical convex analysis. The case of equality is captured in the following definition of the $c$-subdifferential. We denote the graph of a multivalued mapping $M:X\rightrightarrows Y$ by $G(M):=\{(x,y)|\ y\in M(x)\}$. The mapping $M$ is called proper if $\dom(M):=\{ x\in X |\ M(x)\neq\emptyset\}$ is not empty. The image of the mapping $M$ is the subset of $Y$ given by $\im(M):=\cup_{x\in X} M(x)$. 

\begin{definition}[c-subdifferential and c-antiderivative]\label{ctrans}
Let $f:X\to\RX$ be a proper function. The $c$-subdifferential of $f$ is the mapping
$\partial_c f:X\rightrightarrows Y$ defined by
\begin{align}
\partial_c f(x):=\ &\big\{y\in Y\ |\ f(x)+c(x',y)\leq f(x')+c(x,y)\ \ \forall x'\in X\big\}\\
\nonumber\\
=\ &\big\{y\in Y\ |\ f(x)+f^c(y)=c(x,y)\big\}.
\end{align}
When $\partial_c f(x)\neq\emptyset$, we say that $f$ is $c$-subdifferentiable at $x$. When $M:X\rightrightarrows Y$ and $G(M)\subset G(\partial_c f)$, we say that $f$ is a $c$-antiderivative of $M$.
\end{definition}

With these definitions at hand, we now recall the Kantorovich duality theorem. In fact, we borrow from Villani's monograph \cite{vil} a specific version of the theorem which, in addition, tells the story in terms of $c$-convexity:

\begin{theorem}\label{Kantorovich}
Let $(X,\mu)$ and $(Y,\nu)$ be two Polish probability measure spaces. Suppose that the cost function $c:X\times Y\to\RR$ is lower semicontinous and suppose further that it majorizes a function $a(x)+b(y),\ (x,y)\in X\times Y$, for some upper semicontinuous functions $a\in L_1(\mu)$ and $b\in L_1(\nu)$.  Then the following duality holds:
\begin{align}
\min_{\begin{array}{c} \pi\in\Pi(\mu,\nu) \end{array}}
\int_{X\times Y} & c(x,y)d\pi(x,y)\nonumber\\
= & \sup_{\begin{array}{c}f\in L_1(\mu),\ g\in L_1(\nu)\\
g-f\leq c\end{array}}\Bigg(\int_Y g(y)d\nu(y)-\int_X f(x)d\mu(x)\Bigg)\nonumber\\
= &\ \sup_{\begin{array}{c}f\in L_1(\mu)\end{array}}\Bigg(-\int_Y f^{-c}(y)d\nu(y)-\int_X f(x)d\mu(x)\Bigg)\nonumber\\
= &\ \sup_{\begin{array}{c} g\in L_1(\nu)\end{array}}\Bigg(\int_Y g(y)d\nu(y)-\int_X (-g)^{-c}(x)d\mu(x)\Bigg),\nonumber  
\end{align}
and in the above suprema one might as well assume that $f$ and $-g$ are $-c$-convex. Furthermore, if the optimal total cost $C(\mu,\nu)$ is finite, then for a transference plan $\pi\in\Pi(\mu,\nu)$, the following assertions are equivalent:\\
\ \\
(a)\ $\pi$ is an optimal transport plan;\\
(b)\ $\pi$ is concentrated on a $-c$-cyclically monotone set in $X\times Y$;\\ 
(c)\ There is a $-c$-convex function $f:X\to\RR$ such that $\pi$ is concentrated on the set where the equality $f^{-c}+f=-c$ holds. That is, $\pi$ is concentrated on $G(\partial_{-c} f)$. 
\end{theorem}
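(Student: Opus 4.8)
\proof
(Plan.) The statement has two essentially independent parts: the chain of dual identities, and the equivalence of (a)--(c). I would establish the duality first in the model case of a \emph{bounded continuous} cost $c$. The key device is to encode the marginal constraints defining $\Pi(\mu,\nu)$ by a supremum: for every nonnegative Borel measure $\pi$ on $X\times Y$, the quantity $\sup\big(\int_X\varphi\,d\mu+\int_Y\psi\,d\nu-\int_{X\times Y}(\varphi(x)+\psi(y))\,d\pi\big)$, the supremum being over all $\varphi\in C_b(X)$ and $\psi\in C_b(Y)$, equals $0$ if $\pi\in\Pi(\mu,\nu)$ and equals $+\infty$ otherwise. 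Inserting this into the primal problem gives $C(\mu,\nu)=\inf_{\pi\geq0}\,\sup_{\varphi,\psi}\big(\int c\,d\pi+\int\varphi\,d\mu+\int\psi\,d\nu-\int(\varphi(x)+\psi(y))\,d\pi\big)$, the infimum now ranging over all nonnegative Borel measures. The crux is to interchange this infimum with the supremum over $(\varphi,\psi)$; I would justify the interchange by the Fenchel--Rockafellar duality theorem applied to a suitable pair of convex functionals on $C_b(X\times Y)$ (the qualification condition holding because $\Pi(\mu,\nu)\neq\emptyset$ and the functionals are finite and continuous at an appropriate point), or else by a minimax theorem together with the weak-$*$ compactness of $\Pi(\mu,\nu)$ furnished by Prokhorov's theorem on the Polish spaces $X$ and $Y$ -- the same compactness, with lower semicontinuity of $\pi\mapsto\int c\,d\pi$, also yielding attainment of the primal infimum, so that one may write $\min$. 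After the interchange the inner infimum over $\pi\geq0$ equals $-\infty$ unless $\varphi(x)+\psi(y)\leq c(x,y)$ for all $(x,y)$, and is otherwise attained at $\pi=0$; what is left is exactly the supremum of $\int\varphi\,d\mu+\int\psi\,d\nu$ over the admissible pairs $(\varphi,\psi)$, which is the second expression in the statement.

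To reach a lower semicontinuous $c$ bounded from below, I would write $c$ as the pointwise increasing limit of bounded continuous functions and pass to the limit on both sides by monotone convergence; the reduction of the stated hypothesis, in which $c$ merely majorizes $a(x)+b(y)$ with $a\in L_1(\mu)$ and $b\in L_1(\nu)$ upper semicontinuous, to the bounded-below case is obtained by applying the already-proved identity to $c-\big(a(x)+b(y)\big)\geq0$ and undoing the substitution, which is legitimate precisely because $a$ and $b$ are integrable. It then remains to check that the three suprema coincide and may be restricted to $-c$-convex functions: for a fixed admissible $f\in L_1(\mu)$, the requirement ``$g(y)-f(x)\leq c(x,y)$ for every $x$'' is equivalent to ``$g(y)\leq -f^{-c}(y)$ pointwise'', so the largest admissible $g$ is $-f^{-c}$ and the first supremum equals the second; replacing $f$ by $(f^{-c})^{-c}\leq f$ does not decrease the objective and leaves $f^{-c}$ unchanged, so one may take $f$ to be $-c$-convex; the symmetric manipulation in the variable $g$ produces the third expression.

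For the second part, assume $C(\mu,\nu)$ finite; I would prove the cycle (c) $\Rightarrow$ (a) $\Rightarrow$ (b) $\Rightarrow$ (c). The implication (c) $\Rightarrow$ (a) follows from weak duality: if $f$ is $-c$-convex and $\pi$ is concentrated on $G(\partial_{-c}f)$, set $g:=f^{-c}$; then $g-f\leq c$ holds everywhere with equality $\pi$-almost everywhere, so $\int c\,d\pi=\int g\,d\nu-\int f\,d\mu$, and since the pair $(f,g)$ is admissible in the dual problem (its integrability following from the minorization of $c$ and finiteness of the cost), weak duality gives $\int c\,d\pi\leq C(\mu,\nu)\leq\int c\,d\pi$, whence $\pi$ is optimal. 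The implication (b) $\Rightarrow$ (c) is a Rockafellar-type construction: fixing a base point $(x_0,y_0)$ in a $-c$-cyclically monotone Borel set $\Gamma$ that carries $\pi$, one defines $f(x)$ as the supremum, over all finite chains $(x_1,y_1),\dots,(x_k,y_k)$ in $\Gamma$, of the associated telescoping sum formed from $-c$; $-c$-cyclic monotonicity of $\Gamma$ is exactly the condition that prevents $f(x_0)=+\infty$, so $f$ is proper, it is $-c$-convex as a supremum of functions of the form $-c(\cdot,y)+\mathrm{const}$, and a direct check gives $\Gamma\subset G(\partial_{-c}f)$; a measurability argument (first passing, if necessary, to the supremum over chains of bounded length) together with the $a(x)+b(y)$ bound places $f$ in the required class.

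The remaining implication (a) $\Rightarrow$ (b) is the delicate one and, I expect, the main obstacle: one must show that \emph{every} optimal plan $\pi$ is concentrated on some $-c$-cyclically monotone Borel set. The strategy is by contradiction: were no such set to carry $\pi$, then for some finite $N$ there would be a set of positive $\pi$-measure along which rerouting mass around $N$-cycles strictly lowers the total cost, yielding an admissible transport plan cheaper than $\pi$ and contradicting optimality; but making this rigorous requires a measurable selection of the offending cycles and a careful construction of the perturbed plan -- precisely the point at which the classical proofs of the theorem were originally incomplete. Once (a) $\Rightarrow$ (b) is secured, combining it with (b) $\Rightarrow$ (c) and (c) $\Rightarrow$ (a) closes the cycle and completes the proof. \endproof
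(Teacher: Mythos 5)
First, a point of comparison: the paper does not prove this theorem at all. It is quoted verbatim (up to sign conventions) from Villani's monograph \cite{vil}, so the ``paper's proof'' is a citation; the benchmark for your attempt is therefore the standard proof of Kantorovich duality, essentially Villani's Theorem 5.10. Your outline does follow that standard route: encoding the marginal constraints by a supremum over $(\varphi,\psi)$, a Fenchel--Rockafellar or minimax interchange for bounded continuous costs with Prokhorov compactness of $\Pi(\mu,\nu)$ giving attainment of the minimum, monotone approximation to reach lower semicontinuous costs, the reduction of the $a(x)+b(y)$ minorization by subtracting $a\oplus b$ (correctly using upper semicontinuity and integrability of $a,b$), the identification of the largest admissible $g$ as $-f^{-c}$, weak duality for (c)\,$\Rightarrow$\,(a), and a Rockafellar-type construction for (b)\,$\Rightarrow$\,(c). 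As a plan this is sound and is the same architecture as the cited proof.

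However, as a proof it has a genuine gap, and you name it yourself: the implication (a)\,$\Rightarrow$\,(b) is not established. You only sketch a contradiction strategy (reroute mass along finitely many cycles), and in the stated generality --- $c$ merely lower semicontinuous and real-valued, total cost merely finite --- this naive rerouting argument does not go through without substantial additional work: one needs either the approximate-duality argument (take near-optimal dual pairs $(f_k,g_k)$ and show any optimal $\pi$ is concentrated on a set where the limiting equality holds, then verify that set is $-c$-cyclically monotone), or a measurable-selection/gluing construction of the perturbed plan, which is precisely the content of the Schachermayer--Teichmann/Ambrosio--Pratelli-type results that Villani invokes. Without this, the equivalence chain (a)\,$\Rightarrow$\,(b)\,$\Rightarrow$\,(c)\,$\Rightarrow$\,(a) is open at its hardest link. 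Secondary, repairable gaps: the passage to the lsc limit needs more than ``monotone convergence'' (you must show $\min_\pi\int c_n\,d\pi\to\min_\pi\int c\,d\pi$, using compactness of $\Pi(\mu,\nu)$ and lower semicontinuity); the restriction of the suprema to $-c$-convex $f$ (and the third supremum in $g$) requires checking measurability and the appropriate integrability conventions for $f^{-c}$, which can fail to lie in $L_1(\nu)$ and is handled in \cite{vil} by explicit conventions; and in (b)\,$\Rightarrow$\,(c) you should verify that the Rockafellar-type $f$ can be taken real-valued (or explain the convention) and Borel, not merely assert it.
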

 
This is just part of the presentation of the Kantorovich duality theorem one finds in \cite{vil}, where a most extensive and detailed study of optimal transport is presented. We will comment further on the benefits of this presentation of the theorem in Section 3 below. Explicit relations between the Kantorovich duality and $c$-convexity, in particular, $c$-cyclic monotonicity (see Definition \ref{cycmondef} below), appeared as early as \cite{Bre1,Bre2}, if not earlier. A remark regarding the sign conventions we employ is now in order. In \cite{vil} the sign conventions of $c$-convexity were modified so that the Young-Fenchel inequality looks like this: $f^c-f\leq c$. This way the conventions fit better the Kantorovich duality, as can be seen in the first supremum above. However, this way the symmetry between $X$ and $Y$ is lost. In \cite{vil} the definition of the $c$-transform was modified; $c$-convex functions were considered on $X$ while on $Y$, $c$-concave functions were employed. We will employ our standard conventions as defined above. These conventions are compatible with classical convex analysis. However, as we can see in the lower two suprema above, this affects the way we state the duality. Our sign conventions, which will have a higher price when we deal with optimal transport theory issues, but will have a lower price when we deal with pure $c$-convexity issues, stem from the ``standard'' Young-Fenchel inequality with respect to $-c$, that is, $f^{-c}+f\geq -c$, which implies the inequality $(-f^{-c})-f\leq c$, and further, the inequality $-g+(-g)^{-c}\geq -c$, which implies in its turn that  $g-(-g)^{-c}\leq c$, as required. This problem with signs is rooted in the classical definition of $c$-cyclic monotonicity (Definition \ref{cycmondef} below) and in the standard definition of an optimal transport plan, which minimizes the total transport cost rather than maximizes it. In \cite{vil} the definition of $c$-cyclic monotonicity was also taken with an opposite sign to ours. Ours is the extension of the classical definition; see, for example, \cite{roc}.

The rest of the paper is organized as follows: in Section 2 we recall recent, necessary results from $c$-convexity theory regarding families of $c$-convex $c$-antiderivatives and, in particular, optimal $c$-convex $c$-antiderivatives, which were presented by the authors in \cite{BR2}. The main section of this paper is Section 3, where we conduct our more detailed discussion regarding optimal pricing and embed it in the general mathematical framework of the Kantorovich duality theorem as presented above. In order to present a class of specific examples and in order to shed some additional light on optimal prices, we recall in Section 4 concrete examples which were presented by the authors in \cite{BR2}. There, the coupling function is a general metric, we are able to avoid measure theoretic issues completely, and the optimal prices are interpreted as optimal constrained Lipschitz extensions.

\section{The Family $\cA_{[c,f|_s,M]}$ of $c$-Convex $c$-Antiderivatives and Its Envelopes}

In \cite{BR2} the authors of the present paper introduced and studied families of $c$-convex $c$-antiderivatives, defined as families of solutions to the following problem:

\begin{definition}
Given a mapping $M:X\rightrightarrows Y$, a $c$-antiderivative $f$ of $M$ and a subset $S$ of $\dom (M)$, we denote the set of all $c$-convex functions $h:X\to\RX$ which satisfy
\begin{equation}
G(M)\subset G(\partial_c h)\ \ \ \ and \ \ \ h|_S=f|_S
\end{equation}
by $\cA_{[c,f|_S,M]}$.
\end{definition}

Given a mapping $M:X\rightrightarrows Y$, recall that its inverse mapping $M^{-1}:Y\rightrightarrows X$ is defined by $M^{-1}(y)=\{x\in X|\ y\in M(x)\},\ \ y\in Y$. In the above setting, since $M(S)$ is a subset of $\dom (M^{-1})$, it is also possible to consider the $c$-dual problem: $f^c$ is a $c$-antiderivative of $M^{-1}$. We therefore denote the set of $c$-convex solutions $h:Y\to\RX$ of the problem
\begin{equation}
G(M^{-1})\subset G(\partial_c h)\ \ \ \ and \ \ \ h|_{M(S)}=f^c|_{M(S)}
\end{equation}
by $\cA_{[c,f^c|_{M(S)},M^{-1}]}$.

Besides the existence of solutions, the two main $c$-convexity theoretical results from \cite{BR2}, Theorem \ref{main} and Theorem \ref{mainformula} below, imply rather natural structure and duality relations between the families and, in particular, the existence of optimal solutions and duality relations between them. In our applications so far, the fact that we do not assume that $f$ is $c$-convex, just a $c$-antiderivative, is crucial. This means that we are given a function $f$ with good $c$-convexity properties only on $\dom(M)$, which can be arbitrary, and then $f|_S$ is extensible to a ``good'' solution on all of $X$. Indeed, this is crucial in our application to optimal transport in the next section as well. The nonemptiness, duality relations, and existence and duality relations for the envelopes are concentrated in the following result, the first one of the two:

\begin{theorem}\label{main}
Suppose that $f:X\to\RX$ is a $c$-antiderivative of the mapping $M:X\rightrightarrows Y$.  Suppose further that $\emptyset\neq S\subset\mathrm{dom} (M)$. Then $\cA_{[c,f|_S,M]}$ is nonempty and contains both its upper envelope, that is, the function $\gamma_{[c,f|_S,M]}:X\to\RX$ defined by
$$
\gamma_{[c,f|_S,M]}(x):=\sup\{h(x)\ |\ h\in\cA_{[c,f|_S,M]}\},
$$
as well as its lower envelope, that is, the function $\alpha_{[c,f|_S,M]}:X\to\RX$ defined by
$$
\alpha_{[c,f|_S,M]}(x):=\inf\{h(x)\ |\ h\in\cA_{[c,f|_S,M]}\}.
$$
In fact, if $h:X\to\RX$ is any function such that
\begin{equation}\label{problem}
G(M)\subset G(\partial_c h)\ \ \ \ and \ \ \ h|_S=f|_S,
\end{equation}
 then $\alpha_{[c,f|_S,M]}\leq h$ and $h^c\in\cA_{[c,f^c|_{M(S)},M^{-1}]}$. If $h$ is $c$-convex, then
\begin{equation}\label{conjuA}
h\in\cA_{[c,f|_S,M]}\ \ \Leftrightarrow\ \ \ h^c\in\cA_{[c,f^c|_{M(S)},M^{-1}]}.
\end{equation}
Furthermore,
\begin{equation}\label{conjugamma}
\alpha_{[c,f|_S,M]}^c=\gamma_{[c,f^c|_{M(S)},M^{-1}]}\ \ \ \ and\ \ \ \
\gamma_{[c,f|_S,M]}^c=\alpha_{[c,f^c|_{M(S)},M^{-1}]}.
\end{equation}
\\
In the case where  $S=\mathrm{dom}(M)$, we have
\begin{align}
&\gamma_{[c,f|_{\dom (M)},M]}=(f+\iota_{\dom (M)})^{cc}\label{gammafulldom}\\
and\ \ \ \ \ &\nonumber\\
&\alpha_{[c,f|_{\mathrm{dom}(M)},M]}(x)=\ (f^c+\iota_{\mathrm{Im}(M)})^c(x)\\
&\ \ \ \ \ \ \ \ \ \ \ \ \ \ \ \ \ \ \ \ =\ \sup_{(s,t)\in G(M)}\ [f(s)+c(x,t)-c(s,t)],\ \ \ \ x\in X.\label{conjufulldom}
\end{align}
In this case, if $h:X\to\RX$ is $c$-convex, then
\begin{equation}\label{fulldomcrit}
h\in\cA_{[c,f|_{\dom (M)},M]}\ \ \ \ \Leftrightarrow\ \ \ \ \ \alpha_{[c,f|_{\dom (M)},M]}\leq h\leq \gamma_{[c,f|_{\dom (M)},M]}.
\end{equation}
\end{theorem}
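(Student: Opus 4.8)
The plan rests on a few standard facts of abstract convexity, which I would use freely: for proper $f:X\to\RX$ one has $f^{cc}\le f$ and $f^{ccc}=f^c$, with $f$ being $c$-convex precisely when $f=f^{cc}$; and a pointwise supremum of $c$-convex functions is $c$-convex as soon as it is proper. From the hypothesis that $f$ is a $c$-antiderivative of $M$ I would record two consequences used repeatedly: $f^{cc}=f$ on $\dom(M)$ (if $x\in\dom(M)$ and $t\in M(x)$ then $f(x)+f^c(t)=c(x,t)$, so $f(x)=c(x,t)-f^c(t)\le f^{cc}(x)\le f(x)$), and $f^c$ is a $c$-antiderivative of $M^{-1}$ (the same identity, read as $s\in\partial_c f^c(t)$).

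Write $\cA:=\cA_{[c,f|_S,M]}$, $\cB:=\cA_{[c,f^c|_{M(S)},M^{-1}]}$, and abbreviate their envelopes by $\gamma_{\cA},\alpha_{\cA},\gamma_{\cB},\alpha_{\cB}$. The core of the argument is a conjugation duality between $\cA$ and $\cB$. I would first prove: \emph{if $h:X\to\RX$ satisfies \eqref{problem}, then $h^c\in\cB$}. Here $h^c$ is $c$-convex; for $(s,t)\in G(M)$ the relation $t\in\partial_c h(s)$ gives $h(s)+h^c(t)=c(s,t)$, hence $h^{cc}(s)+h^c(t)\le c(s,t)$ (as $h^{cc}\le h$), and the Young--Fenchel inequality supplies the reverse inequality, so $s\in\partial_c h^c(t)$; moreover, for $t_0\in M(S)$, choosing $s_0\in S$ with $(s_0,t_0)\in G(M)$ yields $h^c(t_0)=c(s_0,t_0)-h(s_0)=c(s_0,t_0)-f(s_0)=f^c(t_0)$. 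The symmetric statement --- \emph{if $g\in\cB$ then $g^c\in\cA$} --- is proved identically, now also using $g=g^{cc}$. Applying the first statement to $h=f$ gives $f^c\in\cB$, and a short direct check (using $f^{cc}=f$ on $\dom(M)$ and $f^{ccc}=f^c$) gives $f^{cc}\in\cA$, so both families are nonempty. Combining the two statements, $h\mapsto h^c$ is an order-reversing bijection of $\cA$ onto $\cB$ with inverse $g\mapsto g^c$ (using $h=h^{cc}$ on $\cA$); this is \eqref{conjuA}. Finally, for an arbitrary $h$ satisfying \eqref{problem} we have $h^c\in\cB$, hence $h^{cc}=(h^c)^c\in\cA$, so $\alpha_{\cA}\le h^{cc}\le h$ once $\alpha_{\cA}$ has been identified as the least element of $\cA$.

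To exhibit the envelopes, note that $\gamma_{\cA}=\sup\{h:h\in\cA\}$ is a supremum of $c$-convex functions that is finite on $S$ (every $h\in\cA$ equals $f$ there), hence proper, hence $c$-convex, and $\gamma_{\cA}|_S=f|_S$. Fix $(s_0,t_0)\in G(M)$; by the key statement each $h\in\cA$ satisfies $h^c(t_0)+h(s_0)=c(s_0,t_0)$ (recalling $h$ is $c$-convex), so $\gamma_{\cA}^c(t_0)\le\inf_{h\in\cA}h^c(t_0)=c(s_0,t_0)-\gamma_{\cA}(s_0)$, while Young--Fenchel gives the reverse inequality; hence $t_0\in\partial_c\gamma_{\cA}(s_0)$ and $\gamma_{\cA}\in\cA$. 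Likewise $\gamma_{\cB}\in\cB$. For the lower envelopes I would conjugate: by the elementary identity $(\inf_i h_i)^c=\sup_i h_i^c$ together with the bijection above, $\alpha_{\cA}^c=\sup_{h\in\cA}h^c=\sup_{g\in\cB}g=\gamma_{\cB}$; hence $\gamma_{\cB}^c=\alpha_{\cA}^{cc}\le\alpha_{\cA}$, while $\gamma_{\cB}\in\cB$ forces $\gamma_{\cB}^c\in\cA$, so in fact $\alpha_{\cA}=\gamma_{\cB}^c\in\cA$. The symmetric computation gives $\alpha_{\cB}=\gamma_{\cA}^c\in\cB$; these last two identities are precisely \eqref{conjugamma}.

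In the case $S=\dom(M)$, every $h\in\cA$ equals $f$ on $\dom(M)$, so $h\le f+\iota_{\dom(M)}$ and thus $h=h^{cc}\le(f+\iota_{\dom(M)})^{cc}$; a direct verification (using that $(f+\iota_{\dom(M)})^c=f^c$ on $\im(M)$) shows $(f+\iota_{\dom(M)})^{cc}\in\cA$, so it equals $\gamma_{\cA}$, which is \eqref{gammafulldom}. Applying \eqref{gammafulldom} to the $c$-dual problem and invoking \eqref{conjugamma} gives $\alpha_{\cA}=(f^c+\iota_{\im(M)})^c$, and substituting $f^c(t)=c(s,t)-f(s)$ for $(s,t)\in G(M)$ recasts the right-hand side as the supremum in \eqref{conjufulldom}. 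Finally, for \eqref{fulldomcrit}: on $\dom(M)$ both envelopes equal $f$, so any $c$-convex $h$ with $\alpha_{\cA}\le h\le\gamma_{\cA}$ agrees with $f$ on $S$; and for $(s_0,t_0)\in G(M)$, since $\alpha_{\cA},\gamma_{\cA}\in\cA$ both their conjugates take the value $f^c(t_0)$ at $t_0$, so the squeeze $\gamma_{\cA}^c\le h^c\le\alpha_{\cA}^c$ yields $h^c(t_0)=f^c(t_0)$ and hence $t_0\in\partial_c h(s_0)$. The main obstacle, I expect, is the bookkeeping around the duality: $h\mapsto h^c$ is a genuine bijection only on $c$-convex functions, so one must check that the envelopes are $c$-convex \emph{before} invoking $(\inf_i h_i)^c=\sup_i h_i^c$; and the one genuinely non-formal point is the verification of $G(M)\subset G(\partial_c\gamma_{\cA})$, where one pairs a conjugacy identity valid throughout $\cA$ with the Young--Fenchel inequality to force equality.
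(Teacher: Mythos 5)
Your argument is correct, and there is in fact no in-paper proof to compare it with: the paper imports Theorem \ref{main} verbatim from \cite{BR2} and only records, separately, the constructive route via Rockafellar's antiderivative (Theorem \ref{mainformula}), which the authors explicitly describe as ``reestablishing'' nonemptiness. Your proof is the natural conjugation--duality argument, and all the delicate points check out: the key lemma that any $h$ satisfying \eqref{problem} has $h^c\in\cA_{[c,f^c|_{M(S)},M^{-1}]}$ (via $h^{cc}\le h$ squeezed against Young--Fenchel, plus $h^c(t_0)=c(s_0,t_0)-f(s_0)=f^c(t_0)$ on $M(S)$); the observations $f^{cc}=f$ on $\dom(M)$ and $f^{ccc}=f^c$, which make the symmetric statement and the bijection $h\mapsto h^c$ between the two families work and yield nonemptiness through $f^{cc}\in\cA_{[c,f|_S,M]}$; the direct verification that the upper envelope is $c$-convex (proper supremum of $c$-convex functions, finite on $S$) and lies in the family; and the identification $\alpha^c=\gamma$ of the dual family via $(\inf_i h_i)^c=\sup_i h_i^c$ together with surjectivity of conjugation onto the dual family, from which $\alpha=\gamma_{[c,f^c|_{M(S)},M^{-1}]}^c$ belongs to the family and \eqref{conjugamma} follows; finally the full-domain formulas \eqref{gammafulldom}--\eqref{conjufulldom} and the criterion \eqref{fulldomcrit} are derived correctly (the hint $(f+\iota_{\dom(M)})^c=f^c$ on $\im(M)$ is exactly what is needed). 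Compared with the constructive approach of Theorem \ref{mainformula}, your route buys existence, the envelope memberships and the duality relations all at once without any cyclic-monotonicity bookkeeping, but it does not produce the explicit formula \eqref{alpha} for $\alpha_{[c,f|_S,M]}$ when $S\varsubsetneq\dom(M)$; the two are complementary. One small quibble: your closing remark misplaces the difficulty --- the identity $(\inf_i h_i)^c=\sup_i h_i^c$ needs no $c$-convexity of the envelopes; what the computation really uses is that conjugation maps $\cA_{[c,f|_S,M]}$ \emph{onto} the dual family, which is where $c$-convexity of the members (not of the envelopes) enters. Your argument as actually written does not rely on the misstated version, so this is cosmetic.
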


We see that besides existence and duality relations between optimal $c$-convex $c$-antiderivatives of the two dual families, an explicit construction of these optimal functions is provided in \eqref{gammafulldom} and \eqref{conjufulldom} in the case where the set where we keep the values of $f$ fixed is $S=\dom(M)$. An explicit construction of the optimal $c$-convex $c$-antiderivatives under the general assumptions of Theorem \ref{main} was also presented in \cite{BR2}. It is the second of the two main results mentioned above and is given in Theorem \ref{mainformula} below. To this end, we will need to recall the concept of $c$-cyclic monotonicity:

\begin{definition}[c-cyclic monotonicity]\label{cycmondef}
A mapping $M:X\rightrightarrows Y$ is said to be cyclically monotone of order $n$ with respect to $c$, $n$-$c$-monotone for short, when given any set of $n$ ordered pairs $\{(x_i,y_i)\}_{i=1}^n\subset G(M)$, if we set $x_{n+1}=x_1$, then
\begin{equation}\label{cycmonres}
0\leq\sum_{i=1}^n [c(x_i,y_i)-c(x_{i+1},y_i)].
\end{equation}
In this case we say that $G(M)$ is an $n$-$c$-monotone set. A mapping $M$ is said to be cyclically monotone with respect to $c$, $c$-cyclically monotone for short, if it is $n$-$c$-monotone for all $n\in\mathbb{N}$. 
\end{definition}

At the beginning of the next section we recall a characterization of $c$-cyclic monotonicity which lies at the core of optimal transport plans. Focusing on pure $c$-convexity theory results, we construct the minimal $c$-convex $c$-antiderivative $\alpha_{[c,f|_S,M]}$ in Theorem \ref{mainformula} below by also making use of the following well-known construction of an antiderivative due to Rockafellar. This fact from classical convex analysis also holds in the generality of $c$-convexity:

\begin{definition}[Rockafellar's antiderivative]
With the function $c$, the mapping $M:X\rightrightarrows Y$ and the point $s\in\dom (M)$, we associate Rockafellar's function $R_{[c,M,s]}:X\to\RX$, defined by
\begin{equation}
R_{[c,M,s]}(x):=
\sup_{\begin{array}{c}
                       n\in\mathbb{N},\\
                       x_1=s,\ x_{n+1}=x,\\
                       \{(x_i,y_i)\}_{i=1}^n\subset G(M)
                     \end{array} }
\ \ \sum_{i=1}^n [c(x_{i+1},y_i)-c(x_i,y_i)].
\end{equation}
\end{definition}

\begin{theorem}
A proper mapping $M:X\rightrightarrows Y$ is $c$-cyclically monotone if and only if it
has a proper $c$-antiderivative. In this case, in particular, for any $s\in\dom (M)$,
Rockafellar's function $R_{[c,M,s]}$ is a proper $c$-convex $c$-antiderivative
of $M$ which satisfies $R_{[c,M,s]}(s)=0$. In fact, $R_{[c,M,s]}$ is proper if and only if $M$ is proper and $c$-cyclically monotone.
\end{theorem}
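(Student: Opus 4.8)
The plan is to derive both equivalences from a single object, Rockafellar's function $R:=R_{[c,M,s]}$. First I would dispatch the easy implication of the first equivalence: if $f$ is any proper $c$-antiderivative of $M$ and $\{(x_i,y_i)\}_{i=1}^n\subset G(M)$ with $x_{n+1}=x_1$, then $y_i\in\partial_c f(x_i)$, so the inequality defining $\partial_c$ in Definition \ref{ctrans}, applied with $x'=x_{i+1}$, gives $c(x_{i+1},y_i)-c(x_i,y_i)\le f(x_{i+1})-f(x_i)$; each $f(x_i)$ is finite because $f$ proper forces $f^c>-\infty$ while $y_i\in\partial_c f(x_i)$ means $f(x_i)+f^c(y_i)=c(x_i,y_i)\in\RR$. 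Summing over $i$, the right-hand side telescopes to $0$, and rearranging is exactly \eqref{cycmonres}. Hence the existence of a proper $c$-antiderivative forces $c$-cyclic monotonicity.

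For the converse, and simultaneously for the ``in particular'' clause, I would assume $M$ proper and $c$-cyclically monotone, fix $s\in\dom(M)$, and prove that $R$ is a proper $c$-convex $c$-antiderivative of $M$ with $R(s)=0$. This splits into four steps, each built from the same move: splice or extend admissible chains and invoke \eqref{cycmonres}. Step~(i), $R(s)=0$: the trivial chain $n=1$, $x_1=x_2=s$, $y_1\in M(s)$ contributes $0$, so $R(s)\ge 0$, while every chain contributing to $R(s)$ is a cycle, so \eqref{cycmonres} bounds it above by $0$; in particular $R$ is proper. Step~(ii), the $c$-antiderivative property: for $(x,y)\in G(M)$ and any $x'\in X$, lengthening an arbitrary admissible chain from $s$ to $x$ by the single pair $(x,y)$ produces an admissible chain from $s$ to $x'$; comparing the two chain-sums and passing to the supremum over the first gives the one-step recursion $R(x)+c(x',y)\le R(x')+c(x,y)$, which says precisely $y\in\partial_c R(x)$, so $G(M)\subset G(\partial_c R)$. (Closing instead a chain from $s$ to $x_0\in\dom(M)$ back up with a pair $(x_0,y_0)\in G(M)$ and applying \eqref{cycmonres} also yields $R(x_0)\le c(x_0,y_0)-c(s,y_0)<\infty$, so $R$ is finite on $\dom(M)$.)

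Step~(iii), $c$-convexity: separating the last point of a generic chain rewrites $R$ as $g^c$, where $g(y):=+\infty$ off $\im(M)$ and, for $y\in\im(M)$, $g(y)$ is the infimum over admissible chains from $s$ with final datum $y_n=y$ of the quantity $c(x_n,y)-\sum_{i=1}^{n-1}[c(x_{i+1},y_i)-c(x_i,y_i)]$. The step I expect to be the main obstacle is checking that $g$ is a legitimate generator, i.e.\ that $g:Y\to\RX$ is proper and, above all, never equals $-\infty$ --- this is exactly what breaks down without $c$-cyclic monotonicity. Finiteness somewhere is cheap (the chain $n=1$, $x_1=s$, $y_1=y$, valid for $y\in M(s)\ne\emptyset$, gives $g(y)\le c(s,y)<\infty$), while the lower bound $g(y)\ge c(s,y)>-\infty$ for every $y\in\im(M)$ is precisely \eqref{cycmonres} applied to a chain closed up at $s$. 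Then $R=g^c\in\Gamma_c(X)$ by Definition \ref{cconvexity}, and steps (i)--(iii) finish the converse together with the ``in particular'' clause. Step~(iv), the ``in fact'' sentence: one direction sits in (i); for the other, $R_{[c,M,s]}$ presupposes $s\in\dom(M)$, so $M$ is proper, and if $M$ were not $c$-cyclically monotone, negating Definition \ref{cycmondef} supplies a cycle $\{(x_i,y_i)\}_{i=1}^m\subset G(M)$, $x_{m+1}=x_1$, with $\sum_{i=1}^m[c(x_{i+1},y_i)-c(x_i,y_i)]=\delta>0$; traversing it $k$ times inside a chain that leaves $s$ through some $y_0\in M(s)$ and ends at an arbitrary $x\in X$ makes the chain-sum equal a finite quantity plus $k\delta$, which tends to $+\infty$, so $R\equiv+\infty$ is improper --- the contrapositive we want. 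The only genuinely delicate bookkeeping throughout is keeping the chain indices straight in these splicings; once that is done, each step is a one-line computation.
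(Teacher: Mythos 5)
Your proof is correct, and it is exactly the standard Rockafellar-type argument that the paper invokes without proof (telescoping the subdifferential inequalities for the easy direction; for the converse, showing $R_{[c,M,s]}(s)=0$ by closing chains into cycles, finiteness on $\dom(M)$, the one-step splicing inequality giving $G(M)\subset G(\partial_c R_{[c,M,s]})$, and $c$-convexity by exhibiting $R_{[c,M,s]}$ as $g^c$, i.e.\ as a supremum of functions $c(\cdot,y)+\mathrm{const}$). The delicate points — finiteness of $R_{[c,M,s]}$ on $\dom(M)$ before invoking the subdifferential definition, $g>-\infty$ via cyclic monotonicity, and the $k$-fold cycle traversal for the ``in fact'' clause — are all handled, so nothing is missing.
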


Employing Rockafellar's antiderivative, we reestablish the nonemptiness of the family $\cA_{[c,f|_S,M]}$ by explicitly constructing the function $\alpha_{[c,f|_S,M]}$. Duality relations from Theorem \ref{main} now also yield an explicit construction of $\gamma_{[c,f|_S,M]}$, as we recall in an example in Section 4:

\begin{theorem}\label{mainformula}
Suppose that $f:X\to\RX$ is a $c$-antiderivative of the mapping $M:X\rightrightarrows Y$ and suppose that $\emptyset\neq S\subset\mathrm{dom} (M)$. Then the minimal $c$-antiderivative of $M$ that equals $f$ at the points of $S$, the function $\alpha_{[c,f|_S,M]}\in\cA_{[c,f|_S,M]}$, is given by
\begin{equation}\label{alpha}
\alpha_{[c,f|_S,M]}(x)=\ \sup_{s\in S}\ [f(s)+R_{[c,M,s]}(x)]\ \ \ \ \ \ \ \forall x\in X.
\end{equation}
\end{theorem}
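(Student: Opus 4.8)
The plan is to show that the function on the right-hand side of \eqref{alpha}, call it $\beta(x):=\sup_{s\in S}[f(s)+R_{[c,M,s]}(x)]$, coincides with $\alpha_{[c,f|_S,M]}$. This requires three things: that $\beta$ lies in $\cA_{[c,f|_S,M]}$, and that $\beta\leq h$ for every $h\in\cA_{[c,f|_S,M]}$ (these together force $\beta=\alpha_{[c,f|_S,M]}$, since by Theorem \ref{main} the lower envelope itself belongs to the family).

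First I would verify $\beta\in\cA_{[c,f|_S,M]}$. That $\beta$ is $c$-convex: each $R_{[c,M,s]}$ is $c$-convex (a supremum of functions of the form $x\mapsto c(x,y_i)-c(x_i,y_i)+\cdots$, i.e. a sup of $c$-affine functions shifted by constants, hence $c$-convex), each $f(s)+R_{[c,M,s]}$ is then $c$-convex, and a supremum of $c$-convex functions is $c$-convex (one must check $\beta$ is proper and $<\infty$ somewhere; since $f$ is a $c$-antiderivative of $M$, one shows $f(s)+R_{[c,M,s]}(x)\leq$ a fixed finite bound at a point, e.g. using $c$-cyclic monotonicity of $M$ the Rockafellar sums are bounded appropriately — this is essentially the properness argument in the preceding theorem). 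That $G(M)\subset G(\partial_c\beta)$: given $(x_0,y_0)\in G(M)$, one appends the pair $(x_0,y_0)$ to an admissible chain for $R_{[c,M,s]}(x_0)$ to get a chain for $R_{[c,M,s]}$ ending at any $x'$ with one more link, which yields $R_{[c,M,s]}(x')\geq R_{[c,M,s]}(x_0)+c(x',y_0)-c(x_0,y_0)$; adding $f(s)$ and taking $\sup_{s\in S}$ gives $\beta(x')\geq\beta(x_0)+c(x',y_0)-c(x_0,y_0)$, i.e. $\beta(x_0)+c(x',y_0)\leq\beta(x')+c(x_0,y_0)$ for all $x'$, which is exactly $y_0\in\partial_c\beta(x_0)$. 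That $\beta|_S=f|_S$: for $s_0\in S$, taking $s=s_0$ and the trivial chain ($n=1$, $x_1=x_2=s_0$) gives $R_{[c,M,s_0]}(s_0)=0$, so $\beta(s_0)\geq f(s_0)$; the reverse inequality $\beta(s_0)\leq f(s_0)$ is the delicate point and follows because for any $s\in S$ and any admissible chain from $s$ to $s_0$, $c$-monotonicity-type manipulation together with the fact that $f$ is a $c$-antiderivative of $M$ forces $f(s)+R_{[c,M,s]}(s_0)\leq f(s_0)$ — concretely, $f(s')-f(s)\geq c(x',y)-c(x,y)$ whenever $y\in M(x)$ and $f$ is a $c$-antiderivative, so telescoping along the chain and using $f|_S$-values bounds the Rockafellar sum by $f(s_0)-f(s)$.

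For minimality, let $h\in\cA_{[c,f|_S,M]}$ (or more generally any $h$ satisfying \eqref{problem}). Fix $s\in S$ and an admissible chain $x_1=s,\ x_2,\dots,x_{n+1}=x$ with $(x_i,y_i)\in G(M)$. Since $y_i\in M(x_i)\subset\partial_c h(x_i)$, we have $h(x_{i+1})-h(x_i)\geq c(x_{i+1},y_i)-c(x_i,y_i)$ for each $i$. Telescoping from $i=1$ to $n$ gives $h(x)-h(s)\geq\sum_{i=1}^n[c(x_{i+1},y_i)-c(x_i,y_i)]$, and since $h(s)=f(s)$ (as $s\in S$), we get $h(x)\geq f(s)+\sum_{i=1}^n[c(x_{i+1},y_i)-c(x_i,y_i)]$. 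Taking the supremum over all chains yields $h(x)\geq f(s)+R_{[c,M,s]}(x)$, and then the supremum over $s\in S$ gives $h(x)\geq\beta(x)$. Hence $\beta\leq h$ for all such $h$, so $\beta\leq\alpha_{[c,f|_S,M]}$; combined with $\beta\in\cA_{[c,f|_S,M]}$ (hence $\beta\geq\alpha_{[c,f|_S,M]}$ by definition of the lower envelope), we conclude $\beta=\alpha_{[c,f|_S,M]}$.

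The main obstacle I expect is the properness/finiteness bookkeeping in the first part — specifically showing $\beta(s_0)\leq f(s_0)$ for $s_0\in S$ and, relatedly, that $\beta\not\equiv+\infty$ is avoided in a clean way. This is where the hypothesis that $f$ is a genuine $c$-antiderivative of $M$ (not merely that $M$ is $c$-cyclically monotone) does real work: it is precisely the inequality $f(x')-f(x)\geq c(x',y)-c(x,y)$ for all $y\in M(x)$, $x'\in X$, that both controls the Rockafellar sums from above when the chain returns to $S$ and guarantees $h\mapsto$ the telescoping bound is consistent. Everything else — $c$-convexity of $\beta$, the subdifferential inclusion, the lower bound via telescoping — is a routine chain-manipulation argument of the Rockafellar type.
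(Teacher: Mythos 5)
Your proposal is correct and follows the route this result rests on (the paper itself only quotes the theorem from \cite{BR2}): you show $\beta:=\sup_{s\in S}[f(s)+R_{[c,M,s]}]$ is a proper, $c$-convex $c$-antiderivative of $M$ with $\beta|_S=f|_S$ --- the key inequality $\beta(s_0)\le f(s_0)$ coming from telescoping the $c$-antiderivative inequality for $f$ itself --- and that any $h$ satisfying \eqref{problem} dominates $\beta$ by the same telescoping, which forces $\beta=\alpha_{[c,f|_S,M]}$. The only bookkeeping worth making explicit is that $f$ (and any admissible $h$) is automatically finite on $\dom(M)$, which legitimizes the telescoping and yields $\beta\le f<+\infty$ on $\dom(M)$, hence the properness needed for $c$-convexity of $\beta$.
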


\section{Optimal $c$-Antiderivatives as Optimal Prices Corresponding to an Optimal Transport Plan with Initial Price Constraints}

In order to reach the main discussion of this paper, we first discuss the underlying $c$-convexity structure of the Kantorovich duality theorem and an economic interpretation. Suppose we have mass produced at sources in $X$ and we have consumption targets for this mass in $Y$. The cost of transporting a unit of mass from the source $x\in X$ to the target $y\in Y$  is $c(x,y)$. The problem of optimal transport is to find where each unit of mass should go so that the total cost of transportation is minimal. The main result of the present paper, Theorem \ref{main opot} below, refines mainly the right-hand side of the Kantorovich duality, where we have $c$-convex functions as $c$-antiderivatives of the $c$-cyclically monotone set on which the optimal transport plan in concentrated. Before we focus on the right-hand side, we briefly focus solely on the left-hand side by pointing out that the fact that optimal transport plans are concentrated on $-c$-cyclically monotone sets is rooted in the following equivalent definition of $c$-cyclic monotonicity:    
  
\begin{proposition}\label{optimal transport cyclic equivalent}
Let $X$ and $Y$ be sets and let $c:X\times Y\to\RR$ be a function. Then the mapping  $T:X\rightrightarrows Y$ is $n$-$c$-cyclically monotone if and only if for any set of $n$ pairs $\{(x_i,y_i)\}_{i=1}^n\subset G(T)$, we have
\begin{equation}\label{cyclically monotone by permutations}
\sum_{i=1}^n c(x_i,y_{\sigma(i)})\leq\sum_{i=1}^n c(x_i,y_i)\ \ \ \ \ \ \ \forall\sigma\in S_n,
\end{equation}
where $S_n$ is the permutation group on $\{1,\ldots,n\}$. Consequently, the mapping $T$ is $c$-cyclically monotone  if and only if for every finite set of pairs $\{(x_i,y_i)\}_{i=1}^n\subset G(T)$, \eqref{cyclically monotone by permutations} is satisfied.
\end{proposition}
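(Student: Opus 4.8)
The plan is to reduce the all-permutations condition \eqref{cyclically monotone by permutations} to the defining cyclic condition \eqref{cycmonres} using two elementary observations about permutations. First, once one notices that the $n$ pairs in Definition \ref{cycmondef} may be presented in an arbitrary order, \eqref{cycmonres} is literally the same as \eqref{cyclically monotone by permutations} in the case where $\sigma$ is an $n$-cycle. Second, an arbitrary $\sigma\in S_n$ decomposes into disjoint cycles, and \eqref{cyclically monotone by permutations} for $\sigma$ is then the sum of the corresponding inequalities attached to these cycles, each of which has length $k\le n$; so one also needs that $n$-$c$-monotonicity passes to $k$-$c$-monotonicity for $k\le n$.

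\textbf{Step 1: monotonicity descends in the order.} I would first record that if $T$ is $n$-$c$-monotone, then it is $k$-$c$-monotone for every $1\le k\le n$. Given $k$ pairs $(x_1,y_1),\dots,(x_k,y_k)\in G(T)$, pad the family up to length $n$ by repeating the last pair, i.e. put $(x_j,y_j)=(x_k,y_k)$ for $k<j\le n$. Feeding this padded $n$-tuple into \eqref{cycmonres}: each appended index $j=k,\dots,n-1$ contributes $c(x_k,y_k)-c(x_k,y_k)=0$, while the index $j=n$ together with the wrap-around $x_{n+1}=x_1$ contributes $c(x_k,y_k)-c(x_1,y_k)$; hence the $n$-cyclic sum coincides with the $k$-cyclic sum for $(x_1,y_1),\dots,(x_k,y_k)$, which is therefore $\ge 0$. (As throughout this theory, the families of pairs in Definition \ref{cycmondef} are understood to allow repetitions.)

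\textbf{Step 2: the single-cycle case.} Fix pairs $\{(x_i,y_i)\}_{i=1}^n\subset G(T)$ and let $\sigma$ be an $n$-cycle; enumerate $\{1,\dots,n\}$ along the cycle as $i_1,\dots,i_n$ so that $\sigma(i_a)=i_{a+1}$ (indices mod $n$). Applying \eqref{cycmonres} to the $n$ pairs listed in the \emph{reverse} cycle order $(x_{i_n},y_{i_n}),(x_{i_{n-1}},y_{i_{n-1}}),\dots,(x_{i_1},y_{i_1})$ and re-indexing the resulting cyclic sum produces exactly
$$
\sum_{a=1}^n c(x_{i_a},y_{i_{a+1}})\ \le\ \sum_{a=1}^n c(x_{i_a},y_{i_a}),
$$
which is \eqref{cyclically monotone by permutations} for this $\sigma$. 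Conversely, \eqref{cyclically monotone by permutations} applied to the specific $n$-cycle $\sigma_0$ with $\sigma_0(j)=j-1$ (mod $n$) is, after the same re-indexing, precisely \eqref{cycmonres}; this already gives the implication ``\eqref{cyclically monotone by permutations} for all $\sigma$ $\Rightarrow$ $n$-$c$-monotone'', which needs no cycle decomposition at all.

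\textbf{Step 3: general $\sigma$, and the $c$-cyclic version.} For the remaining implication, assume $T$ is $n$-$c$-monotone, and let $\sigma\in S_n$ and $\{(x_i,y_i)\}_{i=1}^n\subset G(T)$. Decompose $\sigma$ into disjoint cycles with supports $I_1,\dots,I_m$ partitioning $\{1,\dots,n\}$, say $|I_\ell|=k_\ell$. By Step 1, $T$ is $k_\ell$-$c$-monotone, and by Step 2 applied to the $k_\ell$ pairs $\{(x_i,y_i)\}_{i\in I_\ell}$ one gets $\sum_{i\in I_\ell}c(x_i,y_{\sigma(i)})\le\sum_{i\in I_\ell}c(x_i,y_i)$ (trivially an equality when $I_\ell$ is a singleton fixed point). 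Summing over $\ell$ yields \eqref{cyclically monotone by permutations}. The final assertion is then immediate, since $T$ is $c$-cyclically monotone iff it is $n$-$c$-monotone for every $n$. The only point requiring any care is the index bookkeeping in Step 2 --- aligning the ``backward'' shape of \eqref{cycmonres}, in which $y_i$ is coupled with $x_{i+1}$, to a prescribed cycle of $\sigma$, which is precisely why the pairs are listed in reverse cycle order; everything else is routine.
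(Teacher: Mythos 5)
The paper states Proposition \ref{optimal transport cyclic equivalent} without proof (it is invoked as a known equivalence underlying the optimality of transport plans), so there is no in-paper argument to compare against; judged on its own, your proof is correct and complete. Your three ingredients are exactly what is needed: the converse direction via the single permutation $\sigma_0(j)=j-1$ (mod $n$) really is just Definition \ref{cycmondef} rewritten, since that definition couples $y_i$ with $x_{i+1}$; the forward direction genuinely requires both the descent property (because the cycles of a general $\sigma$ have length $k\le n$) and the reverse-order re-indexing in Step 2, and both are carried out correctly --- your padding argument does reproduce the $k$-cyclic sum, and reversing the cycle order correctly aligns the ``backward'' coupling $y_i\leftrightarrow x_{i+1}$ of \eqref{cycmonres} with the ``forward'' coupling $x_i\leftrightarrow y_{\sigma(i)}$ of \eqref{cyclically monotone by permutations}. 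Two small remarks: in Step 1 the zero terms occur at indices $j=k,\dots,n-1$ as you say, but the appended pairs are those with indices $k+1,\dots,n$, so the word ``appended'' is slightly misplaced (the computation itself is right); and your explicit note that the families in Definition \ref{cycmondef} allow repeated pairs is indeed needed for the padding step and is the standard reading in this theory (it is also how the paper uses such families in Rockafellar's construction), so it is good that you flagged it rather than used it silently.
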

We see that a mapping is $c$-cyclically monotone if and only if for any finite set of pairs $\{x_i,y_i\}_{i=1}^n$ in its graph, the total cost of transporting unit masses from the sources $x_i$ to the targets $y_i$ is the highest if we transport according to the assignment of $T$, rather than according to any other rearrangement of transportation.
 
On the right-hand side of the Kantorovich duality we have $price$: suppose that a unit of mass is sold to the distributor (in charge of the transportation) at the source $x$ for the price of $f(x)$ and a unit of mass arriving at the target $y$ is sold by the distributor for the price of $g(y)$. Then, in order to make it worthwhile for the consumer at $y$ and the producer at $x$ to use the distributor and not deal directly between them, the distributor has to offer them a price difference which satisfies $g(y)-f(x)\leq c(x,y)$. Thus, the distributor has to come up with a pair of prices $(f,g)$ satisfying this inequality for all $x$ and $y$. Now, if the distributor buys from the producer at $x$ the mass $\mu(dx)$, then the price of buying it is $f(x)\mu(dx)$. We see that the total price when buying the mass is $\int_X f(x)\mu(dx)$ and, analogously, the total price when selling the mass is $\int_Y g(y)\nu(dy)$. When trading directly, the traders want to minimize the total cost of the transportation, which is the left-hand side of the Kantorovich duality. On the other side of the duality, the distributor seeks to maximize the profit, which is the difference between the total price of selling the mass and the total price of buying it. So far it is clear that if the distributor offers prices such that $g-f\leq c$, then the supremum over the total differences is not higher than the optimal total transport cost. It follows that if for a pair of prices $(f,g)$ such that $g-f\leq c$ and the plan $\pi$ we have equality, then $(f,g)$ is optimal in the dual Kantorovich problem and $\pi$ is an optimal plan. It is also clear that if indeed $-c(x,y)-(-g(y))\leq f(x)$, then supremizing over $y\in Y$ we get $-g^{-c}\leq f$. Equivalently, if $-c-f\leq -g$, then $f^{-c}\leq -g$. Thus, in order to maximize the difference between the total price of selling and buying, the distributor should look for a pair of prices $(f,g)$ such that $g=-f^{-c}$ and $f=(-g)^{-c}$. Thus, we might as well supremize over the pairs $(f,-f^{-c})$, where $f$ is $-c$-convex or over the pairs $((-g)^{-c},g)$, where $-g$ is $-c$-convex. Having a solution $(f,g)$ and a corresponding  optimal plan $\pi$, we see that $\pi$ is concentrated on the set of points $(x,y)$ where the equality $g(y)-f(x)=c(x,y)$ holds since otherwise, if $g(y)-f(x)<c(x,y)$ and a nonzero mass was transported from $x$ to $y$, then there is no equality in the duality for $(f,g)$ and $\pi$. Thus, $\pi$ is concentrated on the set where $g-f=c$, that is, where $f+f^{-c}=-c$, which is the $-c$-cyclically monotone set $G(\partial_{-c}f)$. In what our discussion covered so far in terms of the prices, no distinction between ``special'' optimizing prices was made. We now arrive at the main discussion of the present paper. We wish to introduce some additional market considerations into our discussion which will give rise to such a distinction. 

To this end, we consider the following monopolistic situation in the market of the mass in trade. First, we note the following assumption, underlying our entire discussion: the producers are assumed to be fair in the sense that they are impartial to the end consumer, that is, a producer at $x$ sells every unit of mass at a constant price, regardless of its destination $y$. Similarly, the end consumer is impartial to the producers. Now suppose that the mass already flows optimally with corresponding prices so that the duality is realized, and suppose further that the producers are united under the control of some body, perhaps all owned by a single owner, for example. The producers now wish to adjust prices. For example, because some long term supply contracts expired. On the other hand, a part of the optimal transport plan has to be kept intact, say due to still valid transport contracts. We let the mapping $M:X\rightrightarrows Y$ be the mapping such that $G(M)$ is the part of the optimal plan we keep fixed. Furthermore, some of the producers in  $\dom(M)$ will also have to keep their selling prices $f(x)$ fixed during the planned price adjustment, perhaps also due to still valid contracts with some of their customers. (This could be why these producers are in $\dom(M)$ to begin with.) Now, if the producers are to suggest a new optimal selling price (in the same sense as before, that is, it allows equality in the duality), while taking into account the fixed part $G(M)$ of the optimal transport plan and the set of fixed prices, which we denote by $S\subset\dom(M)$, then they will suggest the highest one possible. According to what we saw above, this price will then also have to be a $-c$-convex $-c$-antiderivative of $M$ and it should coincide with $f$ at the points of $S$ and therefore, according to Theorem \ref{main}, $\gamma_{[-c,f|_S,M]}$ will be the highest price possible. Since we still keep the fixed part of the plan intact, and also with the same price difference between the end points, the distributor should not care about these price changes. Furthermore, for the rest, we will then have an optimal plan which extends the fixed part and which is concentrated on the $-c$-cyclically monotone set $G(\partial\gamma_{[-c,f|_S,M]})$. However, taking all these considerations into account, the end customers would like the distributors' new buying price to be the lowest possible, which must then be $\alpha_{[-c,f|_S,M]}$. 

In the past, the problem of existence of optimal prices, sometimes compatible with conventional optimal transportation, sometimes in a different framework of transportation, drew the attention of authors in various special cases. In some of these cases, a construction of these optimizers was presented in $\RR^n$ under various specifications. A recent such example is \cite{butcar}. These types of problems are related to the economic notions of ``asymmetric information'', ``mechanism design'', ``incentive compatibility'', ``screening'', the ``principal-agent'' framework and others. An even more recent study of problems of optimal pricing, where one also finds a recollection and references of some of the past discussions, can be found in \cite{FHM}.  

At this point we proceed to our main results and explicit formulae under the full generality of the settings of the Kantorovich duality Theorem \ref{Kantorovich}. In order to make our discussion mathematically formal, we will employ Villani's monograph, again, as follows. In his discussion of restrictions of optimal transport plans, Villani presents the following two restriction results. The first result we quote is a pure $c$-convexity property.  

\begin{lemma}\label{Villani's lemma}
Let $X$ and $Y$ be two sets and let $c:X\times Y\to\RR$. Let $X'\subset X$, $Y'\subset Y$ and let $c'$ be the restriction of $c$ to $X'\times Y'$. Let $f:X\to\RX$ be a $c$-convex function. Then there is a $c'$-convex function $f':X\to\RX$  such that $f'\leq f$ on $X'$, $f'$ coincides with $f$ on the projection of $G(\partial_cf)\cap(X'\times Y')$ on $X$ and $G(\partial_c f)\cap(X'\times Y')\subset G(\partial_{c'}f')$.   
\end{lemma}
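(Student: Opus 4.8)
\smallskip
\noindent\textbf{Proof proposal.} The plan is to produce $f'$ by restricting to $Y'$ the dual representation of $f$. Since $f$ is $c$-convex, $f=f^{cc}$, i.e.\ $f(x)=\sup_{y\in Y}\,[\,c(x,y)-f^c(y)\,]$ for every $x\in X$. I would define
\[
f'(x):=\bigl(f^c+\iota_{Y'}\bigr)^c(x)=\sup_{y\in Y'}\,[\,c(x,y)-f^c(y)\,],\qquad x\in X .
\]
Restricted to $X'$ this is precisely $\bigl(f^c|_{Y'}\bigr)^{c'}$, hence $c'$-convex once it is seen to be proper; on all of $X$ it is the $c$-transform of a function supported in $Y'$, so it is a legitimate extension. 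The inequality $f'\le f$ on $X'$ (in fact on $X$) is then immediate, since replacing the supremum over $Y$ by the supremum over the smaller set $Y'$ can only decrease the value: $f'\le f^{cc}\le f$. This settles the first required property.

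Next I would show that $f'$ agrees with $f$ on the set $Z$, defined as the projection onto $X$ of $G(\partial_c f)\cap(X'\times Y')$ (so $Z\subset X'$). Fix $x\in Z$ and choose $y\in Y'$ with $(x,y)\in G(\partial_c f)$; by the equality characterization in Definition~\ref{ctrans}, $f(x)+f^c(y)=c(x,y)$, so that $f'(x)\ge c(x,y)-f^c(y)=f(x)$. Combined with $f'\le f$, this gives $f'(x)=f(x)$. Moreover such an $x$ lies in $\dom(\partial_c f)\subset\dom f$, so $f'(x)=f(x)<+\infty$; hence, provided $G(\partial_c f)\cap(X'\times Y')\ne\emptyset$, $f'$ is proper and therefore genuinely $c'$-convex. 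When that intersection is empty the last two assertions of the lemma are vacuous, and the only content left — the existence of some proper $c'$-convex minorant of $f$ on $X'$ — can be arranged directly, so this degenerate case is not an obstacle.

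Finally I would verify the inclusion $G(\partial_c f)\cap(X'\times Y')\subset G(\partial_{c'}f')$. Let $(x,y)$ lie in the left-hand side. Then $x\in Z$, so $f'(x)=f(x)$ and $f^c(y)=c(x,y)-f(x)$. For an arbitrary $x''\in X'$, testing the supremum defining $f'(x'')$ at the admissible point $y\in Y'$ yields
\[
f'(x'')\ \ge\ c(x'',y)-f^c(y)\ =\ c(x'',y)-c(x,y)+f'(x),
\]
which rearranges into $f'(x)+c(x'',y)\le f'(x'')+c(x,y)$ for all $x''\in X'$. Since $c'$ is the restriction of $c$ to $X'\times Y'$, this says exactly that $y\in\partial_{c'}f'(x)$, as needed.

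The argument is short and I do not expect a serious obstacle; the real points are (i) hitting on the construction $f'=(f^c+\iota_{Y'})^c$ — namely, keeping among the $c$-affine minorants $c(\cdot,y)-f^c(y)$ only those indexed by $y\in Y'$ — and (ii) in the last step, combining the identity $f'(x)=f(x)$ valid on the projection set with the one-point lower bound on $f'(x'')$ to recover the $c'$-subgradient inequality. The only remaining things to watch are the properness bookkeeping in the second step and the (vacuous) degenerate case of an empty intersection.
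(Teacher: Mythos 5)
Your argument is correct, but it takes a genuinely different route from the paper's. The paper never proves Lemma \ref{Villani's lemma} directly: it quotes it from Villani and then recovers it as a special case of its refined restriction Lemma \ref{new restriction lemma}, that is, it applies Theorem \ref{main} to the mapping $M$ with $G(M)=G(\partial_c f)\cap(X'\times Y')$ and $S=\dom(M)$ (the only input being the non-$c'$-convex antiderivative $\tilde f=f|_{X'}+\iota_{\dom(M)}$), and takes $f'=\alpha_{[c',f|_S,M]}$, which by \eqref{conjufulldom} equals $\sup_{(s,t)\in G(M)}[f(s)+c(x,t)-c(s,t)]=\sup_{t}[c(x,t)-f^c(t)]$ with $t$ ranging only over the projection of $G(M)$ onto $Y'$. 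Your $f'=(f^c+\iota_{Y'})^c$ is the same kind of object but with the supremum over all of $Y'$: it is a (generally non-extremal) member of the family $\cA_{[c',f|_S,M]}$, sandwiched between the two envelopes by \eqref{fulldomcrit}, and your verification --- $f'\le f^{cc}=f$, equality on the projection via the equality case of the Young--Fenchel inequality, and the one-point test yielding the $c'$-subgradient inequality --- is essentially Villani's own direct proof. What your route buys is brevity and self-containedness; what the paper's route buys is the entire solution family together with its minimal and maximal elements and their explicit formulas \eqref{alpha} and \eqref{conjugamma}, which is exactly what Theorem \ref{main opot} requires. Two minor bookkeeping points: like the paper (which needs $S\neq\emptyset$), you implicitly assume $G(\partial_c f)\cap(X'\times Y')\neq\emptyset$, and the same witness $y\in Y'$ with $f^c(y)\in\RR$ that gives properness also shows $f'>-\infty$ everywhere, which you should record since $c'$-convex functions take values in $\RX$; and in the truly degenerate case $Y'=\emptyset$ no $c'$-convex function exists at all, so your ``can be arranged directly'' remark needs $Y'\neq\emptyset$. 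Neither point affects the substance.
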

The above lemma lies at the heart of Villani's proof of the following restriction property of the Kantorovich duality theorem:

\begin{theorem}\label{Villani's restriction}
Let $(X,\mu)$ and $(Y,\nu)$ be two Polish probability measure spaces. Suppose that the cost function $c:X\times Y\to\RR$ is lower semicontinous and suppose further that it majorizes a function $a(x)+b(y),\ (x,y)\in X\times Y$, for some upper semicontinuous functions $a\in L_1(\mu)$ and $b\in L_1(\nu)$. Assume that the optimal total cost $C(\mu,\nu)$ is finite. Let $\pi$ be an optimal transport plan, and let $f$ be a $-c$-convex function such that $\pi$ is concentrated on $G(\partial_{-c}f)$. Let $\tilde{\pi}$ be a measure on $X\times Y$ satisfying $\tilde{\pi}\leq\pi$, and $\zeta=\tilde{\pi}[X\times Y]>0$; let $\pi':=\tilde{\pi}/\zeta$, and let $\mu'$ and $\nu'$ be the marginals of $\pi'$. Let $X'\subset X$ be a closed set containing the support of $\mu'$ and let $Y'\subset Y$ be a closed set containing the support of $\nu'$. Let $c'$ be the restriction of $c$ to $X'\times Y'$. Then there is a $-c'$-convex function $f':X'\to\RX$ such that \\
\\
(a) $f'$ coincides with $f$ on the projection of $G(\partial_{-c}f)\cap(X'\times Y')$ on $X$,  which has full $\mu'$-measure;\\
(b) $\pi'$ is concentrated on $G(\partial_{-c'}f')$;\\
(c) $f'$ solves the dual Kantorovich problem between $(X',\mu')$ and $(Y',\nu')$ with cost $c'$.
 
\end{theorem}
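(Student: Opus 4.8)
\bigskip
\noindent\emph{Proof sketch.} The plan is to produce $f'$ by one application of Lemma~\ref{Villani's lemma} and then to read off (a)--(c) from the measure $\pi'$, using Theorem~\ref{Kantorovich} for the restricted data $(X',\mu')$, $(Y',\nu')$, $c'$. First I would apply Lemma~\ref{Villani's lemma} with the coupling function $-c$ (recall $f$ is $-c$-convex), the sets $X'\subset X$, $Y'\subset Y$, and $-c'$ the restriction of $-c$ to $X'\times Y'$; restricting the resulting function to $X'$ yields a $-c'$-convex $f':X'\to\RX$ with: $f'\le f$ on $X'$; $f'=f$ on $\mathrm{proj}_X\!\big(G(\partial_{-c}f)\cap(X'\times Y')\big)$; and $G(\partial_{-c}f)\cap(X'\times Y')\subset G(\partial_{-c'}f')$. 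Since $f$ is $\RR$-valued on $X$ and $f'$ is a proper $-c'$-convex function, $f'$ is $\RR$-valued on $X'$.

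Next I would transfer the information carried by $\pi$ to $\pi'$. Since $\tilde\pi\le\pi$ and $\zeta>0$, $\pi'=\tilde\pi/\zeta\le\pi/\zeta$, so $\pi'$ is again concentrated on $G(\partial_{-c}f)$; and since $\mu',\nu'$ are the marginals of the Borel probability $\pi'$, they are carried by the closed sets $X',Y'$ (a Borel probability on a Polish space gives no mass to the complement of a closed set containing its support), so $\pi'[(X\setminus X')\times Y]=\mu'[X\setminus X']=0$ and $\pi'[X\times(Y\setminus Y')]=0$, i.e. $\pi'$ is concentrated on $X'\times Y'$. Combining, $\pi'$ is concentrated on $G(\partial_{-c}f)\cap(X'\times Y')\subset G(\partial_{-c'}f')$, which is (b); projecting to $X$, the first marginal $\mu'$ is concentrated on $\mathrm{proj}_X\!\big(G(\partial_{-c}f)\cap(X'\times Y')\big)$ (an analytic, hence $\mu'$-measurable, set), where $f'=f$ by the second property above --- this is (a).

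For (c) I would apply Theorem~\ref{Kantorovich} to $(X',\mu')$, $(Y',\nu')$ with cost $c'$: the hypotheses hold, as $c'$ is lower semicontinuous (a restriction of a lower semicontinuous function to a closed set), it majorizes the restriction of $a(x)+b(y)$ with $a\in L_1(\mu')$ and $b\in L_1(\nu')$ (because $\mu'\le\mu/\zeta$, $\nu'\le\nu/\zeta$), and $C'(\mu',\nu')$ is finite (it lies between $\int(a+b)\,d\pi'>-\infty$ and $\int c'\,d\pi'=\zeta^{-1}\int c\,d\tilde\pi<\infty$, the last finiteness coming from $\tilde\pi\le\pi$, $C(\mu,\nu)<\infty$ and $c\ge a+b$). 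Since $\pi'$ is concentrated on $G(\partial_{-c'}f')$ with $f':X'\to\RR$ a $-c'$-convex function, the implication (c)$\Rightarrow$(a) of Theorem~\ref{Kantorovich} for the restricted data shows $\pi'$ is an optimal transport plan, so $\int_{X'\times Y'}c'\,d\pi'=C'(\mu',\nu')$, which by the same theorem equals the value of the dual problem. At $\pi'$-a.e.\ $(x,y)$ one has $f'(x)+(f')^{-c'}(y)=-c'(x,y)$; integrating $c'(x,y)=-f'(x)-(f')^{-c'}(y)$ against $\pi'$ and using its marginals gives $C'(\mu',\nu')=-\int_{X'}f'\,d\mu'-\int_{Y'}(f')^{-c'}\,d\nu'$, so $f'$ attains the supremum in the dual Kantorovich problem between $(X',\mu')$ and $(Y',\nu')$ --- this is (c).

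The main obstacle, I expect, is the integrability bookkeeping implicit in (c): one must check $f'\in L_1(\mu')$ and $(f')^{-c'}\in L_1(\nu')$, so that the displayed identity genuinely exhibits $f'$ as a competitor in the dual supremum rather than a formal maximizer. This should follow by combining the upper bound $f'\le f$ and the $\mu'$-a.e.\ identity $f'=f$ from (a) with the integrability of $f$ against $\mu$ (hence against $\mu'\le\mu/\zeta$) inherited from the dual problem for the original data, and the lower bounds furnished by the $-c'$-convexity of $f'$ together with $c'\ge a+b$ --- this is exactly the delicate passage handled in Villani's monograph. The only remaining technicality, that $\mathrm{proj}_X(G(\partial_{-c}f)\cap(X'\times Y'))$ need not be Borel, is disposed of by the universal measurability of analytic sets, as noted.
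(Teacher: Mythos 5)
Your proposal is correct and follows essentially the same route as the paper: construct $f'$ by one application of Lemma~\ref{Villani's lemma} with the coupling $-c$, deduce (a) and (b) from the fact that $\pi'\leq\pi/\zeta$ is concentrated on $G(\partial_{-c}f)\cap(X'\times Y')\subset G(\partial_{-c'}f')$, and obtain (c) from Theorem~\ref{Kantorovich} applied to the restricted data. You merely spell out in more detail the hypothesis-checking and integrability bookkeeping that the paper (quoting Villani) compresses into ``(c) follows from Theorem~\ref{Kantorovich}.''
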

For the sake of convenience, and, mainly, for the sake of completeness of our argument below, we also quote Villani's proof:

\begin{proof}
Let $f'$ be defined by Lemma \ref{Villani's lemma}. To prove (a) it suffices to note that $\pi'$ is concentrated on        $G(\partial_{-c}f)\cap(X'\times Y')$, so $\mu'$ is concentrated on the projection of $G(\partial_{-c}f)\cap(X'\times Y')$ on $X$. Then $\pi$ is concentrated on $G(\partial_{-c}f)$, so $\tilde{\pi}$ is concentrated  on $G(\partial_{-c}f)\cap(X'\times Y')$, which by Lemma \ref{Villani's lemma} is contained in $G(\partial_{-c'}f')$; this proves (b). Finally, (c) follows from Theorem \ref{Kantorovich}.
\end{proof}

In order to refine Theorem \ref{Villani's restriction} and introduce optimal prices into our discussion, employing our results from Section 2, we first refine Villani's Lemma \ref{Villani's lemma} as follows. The fact that we do not assume the existence of a $c$-convex $c$-antiderivative $f$ in Theorem \ref{main} and Theorem \ref{mainformula} (we only assume the existence of a $c$-antiderivative) will be crucial for us. Indeed, $c$-convexity can be ruined when restricted; however, the property of being a $c$-antiderivative is retained through restriction.

\begin{lemma}\label{new restriction lemma}
Let $X$ and $Y$ be two sets and let $c:X\times Y\to\RR$. Let $X'\subset X$, $Y'\subset Y$ and let $c'$ be the restriction of $c$ to $X'\times Y'$. Let $f:X\to\RX$ be a $c$-convex function. Let $M:X'\rightrightarrows Y'$ be any mapping  such that 
$$
G(M)\subset (G(\partial_{c}f)\cap(X'\times Y'))
$$
and let $\emptyset\neq S\subset\dom(M)$. Then the set $\cA_{[c',f|_S,M]}$ of all $c'$-convex functions $f':X'\to\RX$ such that 
\begin{equation}\label{restriction constrain}
G(M)\subset G(\partial_{c'}f')\ \ \ \ \ \mathrm{and}\ \ \ \ f'|_S=f|_S,
\end{equation}
that is, $f'$ is a $c'$-antiderivative of $M$ and $f'|_S=f|_S$, is not empty. In particular, $\alpha_{[c',f|_S,M]}$ and $\gamma_{[c',f|_S,M]}$ are the minimal and the maximal $c'$-convex functions which satisfy \eqref{restriction constrain}, respectively. 
\end{lemma}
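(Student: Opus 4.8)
The plan is to reduce the claim to a single application of Theorem \ref{main}, taking the coupling function to be the restriction $c'$, the ambient sets to be $X'$ and $Y'$, the mapping $M:X'\rightrightarrows Y'$ to be as given, and the role of the ``$c$-antiderivative'' in that theorem to be played by the restriction $f|_{X'}:X'\to\RX$. As the paragraph preceding the lemma already signals, the only thing one has to verify is that, although $f|_{X'}$ need not be $c'$-convex, it remains a \emph{proper $c'$-antiderivative} of $M$; everything else is then furnished directly by Theorem \ref{main}.

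To carry this out I would first record the immediate consequences of the hypothesis $G(M)\subset G(\partial_cf)\cap(X'\times Y')$: namely $\dom(M)\subset X'$ and $\im(M)\subset Y'$, so that $M$ is legitimately a mapping $X'\rightrightarrows Y'$ and $S\subset\dom(M)\subset X'$. Next I would check that $f|_{X'}$ is proper: since $S\neq\emptyset$ one may pick $x_0\in\dom(M)$ and $y_0\in M(x_0)\subset\partial_cf(x_0)$, so $f(x_0)+f^c(y_0)=c(x_0,y_0)\in\RR$; because $f$ is (being $c$-convex) proper and $c$ is real-valued, $f^c(y_0)>-\infty$, while $f(x_0)<+\infty$ as otherwise this finite sum would be impossible, whence $f(x_0)\in\RR$. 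Then I would verify the inclusion $G(M)\subset G(\partial_{c'}(f|_{X'}))$: fixing $(x_0,y_0)\in G(M)$, the relation $y_0\in\partial_cf(x_0)$ reads $f(x_0)+c(x',y_0)\le f(x')+c(x_0,y_0)$ for all $x'\in X$; restricting the quantifier to $x'\in X'$ and replacing $c$ by $c'$ (legitimate since $x_0,x'\in X'$ and $y_0\in Y'$, where $c$ and $c'$ agree) yields $f|_{X'}(x_0)+c'(x',y_0)\le f|_{X'}(x')+c'(x_0,y_0)$ for all $x'\in X'$, i.e. $y_0\in\partial_{c'}(f|_{X'})(x_0)$. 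Hence $f|_{X'}$ is a proper $c'$-antiderivative of $M$, and since $\emptyset\neq S\subset\dom(M)$, Theorem \ref{main} applied with $(X',Y',c',M,f|_{X'},S)$ in place of $(X,Y,c,M,f,S)$ gives that $\cA_{[c',\,(f|_{X'})|_S,\,M]}$ is nonempty and contains both its lower envelope $\alpha_{[c',(f|_{X'})|_S,M]}$ and its upper envelope $\gamma_{[c',(f|_{X'})|_S,M]}$, which are by construction the minimal and maximal $c'$-convex functions satisfying \eqref{restriction constrain}. Since $S\subset X'$ forces $(f|_{X'})|_S=f|_S$, this family coincides with the set $\cA_{[c',f|_S,M]}$ in the statement, completing the argument.

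I do not expect a real obstacle here: the whole content lies in the observation — already flagged before the lemma — that restriction may destroy $c$-convexity but always preserves the property of being a $c$-antiderivative, which is precisely the (weaker) hypothesis on which Theorem \ref{main} is built. The only care required is the routine bookkeeping around the properness of $f|_{X'}$ and the identity $c=c'$ on $X'\times Y'$.
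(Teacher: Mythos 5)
Your proof is correct and takes essentially the same route as the paper: both reduce the lemma to a single application of Theorem \ref{main} by exhibiting a (not necessarily $c'$-convex) $c'$-antiderivative of $M$ that agrees with $f$ on $S$, the key point being that restriction preserves the $c$-antiderivative property even though it may destroy $c$-convexity. The only cosmetic difference is the witness function: you use $f|_{X'}$, while the paper uses $\tilde f:=f|_{X'}+\iota_{\dom(M)}$; both verifications are immediate.
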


\begin{proof}
This will be seen to be a direct consequence of Theorem \ref{main} as soon as we provide a $c'$-antiderivative $\tilde{f}:X'\to\RX$ of $M$ such that $\tilde{f}|_S=f|_S$ (not necessarily $c'$-convex). Indeed, a straightforward verification, according to the definition of the $c'$-subdifferential, shows that the function $\tilde{f}:=f|_{X'}+\iota_{\dom (M)}$ will do the job. 
\end{proof}

We see that applying our results from Section 2, we recover Villani's Lemma \ref{Villani's lemma} as a particular case, namely, if we let $G(M):= (G(\partial_{c}f)\cap(X'\times Y'))$ and $\emptyset\neq S:=\dom(M)$, then the function $\alpha_{[c',f|_S,M]}$ has all the properties of the function $f'$ in the conclusion of Lemma \ref{Villani's lemma}.
   
Now, in Villani's proof of Theorem \ref{Villani's restriction} above the fact that $f'$ (which was supplied by the conclusion of Lemma \ref{Villani's lemma}) and $f$ coincide on the projection of $G(\partial_{c}f)\cap(X'\times Y')$ on $X$ was not employed. It was, in fact, part of the conclusion of Theorem \ref{Villani's restriction}. Dropping this conclusion (it will be replaced by the much more general outcome above), we see that the only properties of $f'$ that matter are that (1) it is $c'$-convex and that (2) $\pi'$ is concentrated on $ G(\partial_{c'}f')$. Thus, $f'$ can be replaced by any other function satisfying (1) and (2). Combining this discussion with our refined Lemma \ref{new restriction lemma}, we deduce the following theorem regarding optimal constrained prices $f'$. It is the main result of the present paper and embeds our discussion regarding optimal prices in the general mathematical framework of the Kantorovich duality Theorem \ref{Kantorovich}.

\begin{theorem}\label{main opot}
Assuming the hypotheses and settings of Theorem \ref{Villani's restriction}, let $M:X'\rightrightarrows Y'$ be any mapping such that $\pi'$ is concentrated on $G(M)$ and $G(M)\subset G(\partial_{-c}f)\cap(X'\times Y')$. Let $\emptyset\neq S\subset\dom{M}$. Then $\cA_{[-c',f|_S,M]}$ is the family of $-c'$-convex functions $f':X'\to\RX$ such that\\
\\
(a) $f'$ coincides with $f$ on $S$ and $\dom (f')$ has full $\mu'$ measure;\\
(b) $f'$ is a $-c'$-antiderivative of $M$ and, in particular, $\pi'$ is concentrated on $G(\partial_{-c'}f')$;\\
(c) $f'$ solves the dual Kantorovich problem between $(X',\mu')$ and $(Y',\nu')$ with cost $c'$.\\
\\
Furthermore, $\cA_{[-c',f|_S,M]}$ is not empty. In particular, $\alpha_{[-c',f|_S,M]}$ and $\gamma_{[-c',f|_S,M]}$ are the minimal and the maximal $c'$-convex functions with these properties, respectively. The function $\alpha_{[-c',f|_S,M]}$ is given, explicitly, by \eqref{alpha} and $\gamma_{[-c',f|_S,M]}$ is given via \eqref{conjugamma}. 
\end{theorem}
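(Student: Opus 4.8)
The plan is to obtain the theorem by feeding the restricted data into Lemma~\ref{new restriction lemma} and then translating the membership condition for $\cA_{[-c',f|_S,M]}$ into the list (a)--(c), essentially along the lines of Villani's proof of Theorem~\ref{Villani's restriction}.

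First I would apply Lemma~\ref{new restriction lemma} with the coupling function $-c$ (and its restriction $-c'$) in the roles of $c$ (and $c'$). The hypotheses of that lemma are exactly the standing assumptions here: $f$ is $-c$-convex, $G(M)\subset G(\partial_{-c}f)\cap(X'\times Y')$, and $\emptyset\neq S\subset\dom(M)$. The lemma then yields at once that $\cA_{[-c',f|_S,M]}$ is nonempty and that $\alpha_{[-c',f|_S,M]}$ and $\gamma_{[-c',f|_S,M]}$ are its least and greatest elements, that is, the least and greatest $-c'$-convex functions $f':X'\to\RX$ with $G(M)\subset G(\partial_{-c'}f')$ and $f'|_S=f|_S$.

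Second I would show that, for a $-c'$-convex $f':X'\to\RX$, membership in $\cA_{[-c',f|_S,M]}$ is equivalent to (a)--(c). The implication from (a)--(c) is immediate, since (a) contains $f'|_S=f|_S$ and (b) contains $G(M)\subset G(\partial_{-c'}f')$, which together with $-c'$-convexity is the definition of $\cA_{[-c',f|_S,M]}$. For the converse, let $f'\in\cA_{[-c',f|_S,M]}$. As $\pi'$ is concentrated on $G(M)\subset G(\partial_{-c'}f')$, it is concentrated on $G(\partial_{-c'}f')$, which is the second clause of (b). On $G(\partial_{-c'}f')$ one has $f'(x)+(f')^{-c'}(y)=-c'(x,y)\in\RR$, since $c$ and hence $c'$ is real-valued; because $(f')^{-c'}$ is proper ($f'$ being $-c'$-convex) and cannot equal $+\infty$ at such a $y$ (else $f'(x)=-\infty$, impossible, as $-c'$-convex functions are $\RX$-valued), both $(f')^{-c'}(y)$ and $f'(x)$ are finite there. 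Hence $\dom(M)$, the projection of $G(M)$ on $X'$, is contained in $\dom(f')$, and it has full $\mu'$-measure since $\pi'$ is concentrated on $G(M)$ and $\mu'$ is the first marginal of $\pi'$; together with $f'|_S=f|_S$ this gives (a). Finally, (c) follows by the same argument Villani uses to prove part (c) of Theorem~\ref{Villani's restriction}: that argument checks the hypotheses of Theorem~\ref{Kantorovich} for $(X',\mu')$, $(Y',\nu')$, $c'$ ($X'$ and $Y'$ are closed in Polish spaces, hence Polish; $c'$ is lower semicontinuous; $c'$ majorizes $a|_{X'}(x)+b|_{Y'}(y)$, which are upper semicontinuous and, since $\mu'\leq\mu/\zeta$ and $\nu'\leq\nu/\zeta$, belong to $L_1(\mu')$ and $L_1(\nu')$; and $C(\mu',\nu')<\infty$ because $c^+$ is $\pi$-integrable, so $C(\mu',\nu')\leq\int c'\,d\pi'\leq\tfrac1\zeta\int c\,d\pi<\infty$), and it then invokes Theorem~\ref{Kantorovich} using only that $f'$ is $-c'$-convex and that $\pi'$ is concentrated on $G(\partial_{-c'}f')$ — both of which we now have — to conclude that $f'$ realizes the dual supremum between $(X',\mu')$ and $(Y',\nu')$ with cost $c'$.

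The remaining assertions are bookkeeping. Nonemptiness of $\cA_{[-c',f|_S,M]}$, and the identification of $\alpha_{[-c',f|_S,M]}$ and $\gamma_{[-c',f|_S,M]}$ as its least and greatest members — hence, by the equivalence just proved, as the least and greatest $-c'$-convex functions enjoying (a)--(c) — are immediate from the first step. The explicit formula for $\alpha_{[-c',f|_S,M]}$ is Theorem~\ref{mainformula} read with coupling function $-c'$, namely $\alpha_{[-c',f|_S,M]}(x)=\sup_{s\in S}[\,f(s)+R_{[-c',M,s]}(x)\,]$; and $\gamma_{[-c',f|_S,M]}$ is then recovered from the duality formulas \eqref{conjugamma}, being the $-c'$-transform of the corresponding minimal member of the dual family associated with $M^{-1}$, which is itself given by Rockafellar's formula \eqref{alpha}. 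I expect the only genuinely delicate point to be the measure-theoretic verification inside the derivation of (c); but this is precisely the computation Villani already carries out in the proof of Theorem~\ref{Villani's restriction}, and all the new input has been absorbed into Lemma~\ref{new restriction lemma}, whose effect is exactly to replace the single function delivered by Lemma~\ref{Villani's lemma} with the whole family $\cA_{[-c',f|_S,M]}$, including its two optimal members.
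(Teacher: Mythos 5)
Your proposal is correct and takes essentially the same route as the paper: Lemma~\ref{new restriction lemma} (i.e., Theorems~\ref{main} and \ref{mainformula} applied with the coupling $-c'$) supplies the nonempty family $\cA_{[-c',f|_S,M]}$ together with its envelopes and their formulas, and the observation that Villani's proof of Theorem~\ref{Villani's restriction} uses only the $-c'$-convexity of $f'$ and the concentration of $\pi'$ on $G(\partial_{-c'}f')$ shows every member of the family satisfies (a)--(c), while (a)--(c) trivially imply membership. Your explicit checks of the integrability hypotheses and of the full $\mu'$-measure of $\dom(f')$ simply spell out details the paper delegates to Villani's argument.
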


In the next section we discuss a family of examples where an additional structure is available due to the properties of the cost function we study, namely, a metric. Formula  \eqref{alpha} is then directly expanded in order to present $\alpha_{[c,f|_S,M]}$, and, indeed, as Theorem \ref{main opot} implies, we also expand $\gamma_{[c,f|_S,M]}$ via formula \eqref{conjugamma}. In general, the way to explicitly write an expansion of $\gamma_{[c,f|_S,M]}$ via formula \eqref{conjugamma} is to use formula \eqref{alpha} in order to write down the function $\alpha_{[c,f^c|_{M(S)},M^{-1}]}$ and then $c$-transforming it in order to get the function $\gamma_{[c,f^{cc}|_{(M^{-1}(M(S))},(M^{-1})^{-1}]}$ which is equal to the function $\gamma_{[c,f|_S,M]}$ (even if $S\varsubsetneq M^{-1}(M(S))$).

\begin{example}
Suppose that $Y=X,\ \mu=\nu$ and suppose that the cost of not moving a unit of mass is zero for every unit of mass, that is, $c(x,x)=0$ for every $x\in X$.
Suppose further that an optimal plan $\pi\in\Pi(\mu,\nu)$ is obtained by zero transportation, that is, $\pi$ is concentrated on the graph of the identity mapping $I:X\to X$. In this case, in each producer-consumer pair, the producer and his/her end consumer are at the same location in $X$, trading directly between them, with no middle distributor, so that in the dual Kantorovich problem the pairs of admissible prices are of the form $(f,g)=(f,f)$ and $f$ satisfies $f(y)-f(x)\leq c(x,y)$ for every $(x,y)\in X\times X$. Now, suppose the price $f$ can be adjusted while we keep the prices on a given set $S\subset X$ fixed and also preserve the direct trading in $S$. Then, employing the formulae from Theorem \ref{main}, the lowest and highest admissible prices possible are:
\begin{equation}\label{ex1}
\alpha_{[-c,f|_S,I_S]}(x)=\sup_{s\in S}\ [f(s)-c(x,s)]\ \ \ \ and\ \ \ \ \ \gamma_{[-c,f|_S,I_S]}(x)=\inf_{s\in S}\ [f(s)+c(x,s)],\ \ \ \ x\in X,
\end{equation}  
respectively. When $c$ is a metric on $X$, then formulae \eqref{ex1} are precisely the optimal McShane and Whitney (Lipschitz) extensions of $f$ which we will discuss more extensively in the next section.        
\end{example}

We proceed with the following simple example with non-zero optimal transportation: 

\begin{example}
Let $X=[-\frac{3}{2},-1]\cup[1,\frac{3}{2}],\ \ Y=\{-1,1\}$, let $\mu$ be the Lebesgue measure on $X$ and let $\nu$ be equally distributed on $Y$ between $-1$ and $1$, that is, $\nu=\frac{1}{2}\delta_{\{-1\}}+\frac{1}{2}\delta_{\{1\}}$. Suppose that the cost function $c:X\times Y\to\RR$ is the function $c(x,y)=x\cdot y$. Now, consider the transport plan $\pi\in\Pi(\mu,\nu)$, which is obtained by transporting the mass of the segment $[-\frac{3}{2},-1]$ to $\{1\}$ and the mass of the segment $[1,\frac{3}{2}]$ to $\{-1\}$. Then $\pi$ is concentrated on the graph of the $-c$-cyclically monotone function $M:X\to Y$, defined by
$$
M(x)=\Big\{\begin{array}{cc}
\ \ 1 & \ \ -\frac{3}{2}\leq x\leq -1; \\ 
-1 & 1\leq x\leq\frac{3}{2}
\end{array} 
$$ 
and we have
$$
\int_{X\times Y} c(x,y)d\pi(x,y)=-\frac{5}{4}.
$$  
Consider also the price $f:X\to\RR$ defined by $f(x)=|x|$ and the zero price $g:Y\to\RR$. Then 
\begin{equation}\label{total price difference}
g(y)-f(x)\leq c(x,y)\ \ \ \forall(x,y)\in X\times Y\ \ \ \ and\ \ \ \int_Y g(y)d\nu(y)-\int_X f(x)d\mu(x)=-\frac{5}{4}.
\end{equation}
Since we have equality in the Kantorovich duality, $\pi$ is an optimal transport plan and the pair $(f,g)$ solves the dual Kantorovich problem. Suppose now that we keep the price $f$ fixed on the set $S=[1,\frac{3}{2}]$ and we look for prices on all of $X$, which are compatible with the optimal transport plan $\pi$. Then the set of all such prices is 
\begin{equation}\label{example formula of A}
\cA_{[-c,f|_S,M]}=\Bigg\{h_p:X\to\RR\ \ \Big|\ \ h_p(x):=\Big\{\begin{array}{cc}
-x+p &\ \  -\frac{3}{2}\leq x\leq -1; \\ 
x & 1\leq x\leq\frac{3}{2}
\end{array},
\ \ -2\leq p\leq 2\ \Bigg\},     
\end{equation}
where its envelopes, the optimal compatible prices $\alpha_{[-c,f|_S,M]}$ and $\gamma_{[-c,f|_S,M]}$, are obtained by letting $p=-2$ and $p=2$ in the formula in \eqref{example formula of A}, respectively. If $h_{p}\in\cA_{[-c,f|_S,M]}$ for $-2\leq p\leq 2$, then 
\begin{equation}\label{example dual formula}
h_{p}^{-c}(y)=\Big\{\begin{array}{cc}
0 &\  y=-1 \\ 
-p & y=1,
\end{array} 
\end{equation}   
which defines the $-c$-dual family $\cA_{[-c,f^{-c}|_{M(S)},M^{-1}]}$. In particular, its envelopes $\alpha_{[-c,f^{-c}|_{M(S)},M^{-1}]}$ and $\gamma_{[-c,f^{-c}|_{M(S)},M^{-1}]}$ are obtained by letting $p=2$ and $p=-2$ in \eqref{example dual formula}, respectively. It follows that the pairs of prices which solve the dual Kantrovich problem with the price constraints $f$ and $-f^{-c}$ on the sets $S\subset X$ and $M(S)\subset Y$, respectively, are of the form $(h,-h^{-c}),\ h\in\cA_{[-c,f|_S,M]}$. Indeed, for $-2\leq p\leq 2$, we have $-h_p^{-c}(y)-h_p(x)\leq c(x,y)$ for all $(x,y)\in X\times Y$ and the same total price difference as in \eqref{total price difference}. If, for $p\notin [-2,2]$, we let $f'$ be defined by the formula in \eqref{example formula of A} and $g'$ be defined by \eqref{example dual formula}, then we would still have the same total price difference. However, the pair $(f',g')$ is not a solution to the dual Kantorovich problem since the price difference constraint $g-f\leq c$ is not satisfied. Also, in this case, $f'$ is not a $-c$-antiderivative of $M$. 

\end{example}

Finally, our discussion in this section clearly raises the issue of uniqueness of solutions to the dual Kantorovich problem. Much is to be said in this context; see, for example, \cite{vil}. In the context of our discussion here, it is natural to ask under which conditions the family $\cA$ is a singleton. At this point we relegate this question to a future study.

\section{ Example: an Additional Perspective on Optimal Prices in the Metric Case}

When the cost function is a metric, then the Knatorovich duality theorem becomes the well-known Kantorovich-Rubinstein formula for Lipschitz functions. As an example of optimal prices on the right-hand side of the duality, we now wish to draw the reader's attention to a main result from \cite{BR2}. The following discussion allows us to drop all measure theoretic issues since the discussion is valid on a more basic level of sets which does not require measurability. In this case we also benefit from an additional interpretation of optimal prices as optimal constrained Lipschitz extensions which, in its simplest form, recovers the well-known optimal Lipschitz extensions of McShane \cite{mcs} and Whitney \cite{whi}:

\begin{theorem}[McShane, Whitney]
Let $S$ be a nonempy subset of a metric space $(X,d)$ and let $f:S\to\mathbb{R}$ be $1$-Lipschitz. Then $f$ extends to a $1$-Lipschitz function which is defined on all of $X$. In particular, the functions
$$
\alpha(x)=\sup_{s\in S}\ [f(s)-d(x,s)]\ \ \ \ \ \ \ \ \ \mathrm{and}\ \ \ \ \ \ \ \ \ \gamma(x)=\inf_{s\in S}\ [f(s)+d(x,s)]
$$
are $1$-Lipschitz extensions of $f$. If $h:X\to\mathbb{R}$ is $1$-Lipschitz and $h|_S=f$, then $\alpha\leq h\leq\gamma$.
\end{theorem}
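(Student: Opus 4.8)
The plan is to derive the McShane--Whitney theorem directly from the $c$-convexity machinery of Section 2, specialized to the coupling function $c = -d$. First I would verify the translation dictionary: for a metric space $(X,d)$ and $c := -d$ on $X \times X$, a function $h\colon X \to \RR$ is $1$-Lipschitz if and only if $h(y) - h(x) \leq d(x,y)$ for all $x,y$, i.e. $h^{c} - h \leq c$ in the $-c = d$ sign convention; equivalently, the $1$-Lipschitz functions are exactly the $(-d)$-convex functions (this uses the triangle inequality and $d(x,x)=0$ to see that $(-d)$-convexity forces, and is forced by, $1$-Lipschitzness, so $\Gamma_{-d}(X)$ is the class of $1$-Lipschitz functions). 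Moreover a $1$-Lipschitz $h$ with $h|_S = f$ is precisely a member of $\cA_{[-d,\,f|_S,\,I_S]}$ where $I_S$ is the identity restricted to $S$, since for $1$-Lipschitz $h$ one has $s \in \partial_{-d}h(s)$ automatically (as $h(s)+d(s,s') \geq h(s')$ and $h(s')+d(s,s')\geq h(s)$ give $h(s) - d(s,s) = h(s) \leq h(s') + d(s,s') - d(s,s')$, the defining inequality for $s \in \partial_{-d}h(s)$).

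Next I would check the hypotheses of Theorem \ref{main} (equivalently, its specialization in Example with $c(x,x)=0$): take $M := I_S\colon X \rightrightarrows X$ with $G(M) = \{(s,s) : s \in S\}$, so $\dom(M) = S$, and take $f$ itself as a $(-d)$-antiderivative of $M$ on $S$ --- this is exactly the assumption that $f$ is $1$-Lipschitz on $S$, which guarantees, via Rockafellar's theorem, that $M$ is $(-d)$-cyclically monotone and that the family is nonempty. Then Theorem \ref{main} with $S = \dom(M)$ applies and the explicit formulae \eqref{conjufulldom} and \eqref{gammafulldom}, or more transparently the displayed formulae \eqref{ex1} from the Example, give
$$
\alpha_{[-d,\,f|_S,\,I_S]}(x) = \sup_{s\in S}\,[f(s) - d(x,s)], \qquad
\gamma_{[-d,\,f|_S,\,I_S]}(x) = \inf_{s\in S}\,[f(s) + d(x,s)],
$$
and the criterion \eqref{fulldomcrit} says that a $(-d)$-convex $h$ lies in the family if and only if $\alpha \leq h \leq \gamma$. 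Unwinding the dictionary, this is exactly the assertion: $\alpha$ and $\gamma$ are $1$-Lipschitz extensions of $f$, and any $1$-Lipschitz extension $h$ of $f$ satisfies $\alpha \leq h \leq \gamma$; in particular such extensions exist.

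I still need to confirm the two facts that \eqref{ex1} leaves implicit in this metric setting: that $\alpha$ and $\gamma$ are finite-valued (so genuinely map into $\RR$, not $\RXX$) and that they actually restrict to $f$ on $S$. Finiteness follows by fixing any basepoint $s_0 \in S$ and using $f$'s $1$-Lipschitzness on $S$ together with the triangle inequality to sandwich $f(s) - d(x,s)$ between $f(s_0) - d(x,s_0) - 2d(s,s_0) \cdot 0$-type bounds --- concretely $f(s) - d(x,s) \leq f(s_0) + d(s,s_0) - d(x,s) \leq f(s_0) + d(x,s_0)$, and symmetrically from below, so both envelopes are real numbers. That $\alpha|_S = \gamma|_S = f$ is the standard one-line check: for $s' \in S$ the term $s = s'$ gives value exactly $f(s')$, and $1$-Lipschitzness of $f$ on $S$ shows no other $s$ beats it. The main (very mild) obstacle is purely expository: making the sign-convention bookkeeping between ``$1$-Lipschitz'' and ``$(-d)$-convex / $(-d)$-antiderivative of $I_S$'' completely airtight, since the paper's convention is $f^{-c} + f \geq -c$ and one must track the $-c = d$ substitution carefully so that Theorem \ref{main}'s conclusions land on the correct side of the inequalities; once that is pinned down, everything is an immediate specialization and no real analysis is required.
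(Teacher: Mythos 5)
Your proposal is correct and follows essentially the same route as the paper, which obtains McShane--Whitney as the most particular case of its $c$-convexity machinery (Proposition \ref{lipconv} identifying $1$-Lipschitz functions with $-d$-convex functions, then Theorem \ref{main}/Theorem \ref{Lipext} with $M=I_S$, $S=\dom(M)$, yielding the envelopes \eqref{minmaxfSI}). The only cosmetic point is that to invoke Theorem \ref{main} literally you should extend $f$ to all of $X$ by $+\infty$ off $S$ (the same device used in the proof of Lemma \ref{new restriction lemma}), which is immediate and does not affect your argument.
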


In our discussion of Lipschitz functions we will assume that the functions are $1$-Lipschitz. Since $Kd^\alpha$, where $0<\alpha\leq 1$ and $K>0$, is also a metric whenever $d$ is, our results and formulae are also easily extensible to $\alpha$-H\"{o}lder continuous functions with constant $K$ by replacing $d$ with $Kd^\alpha$. In what follows below we present, in a more detailed and explicit form, an extension result of the following nature:

\begin{theorem}\label{lipext1}
Let $(X,d)$ be a metric space. Let $f:A\subset X\to\RR$ be a $1$-Lipschitz function and let $M:A\rightrightarrows A$ be a mapping such that
\begin{equation}\label{cond2}
f(y)-f(x)=d(x,y)\ \ \ \ \ \ \mathrm{for\ all}\ \ (x,y)\in G(M).
\end{equation}
Given $\emptyset\neq S\subset\dom (M)$, $f|_S$ extends to a $1$-Lipschitz function which is defined on all of $X$ and which satisfies (\ref{cond2}). Moreover, there exist minimal and maximal $1$-Lipschitz extensions of $f|_S$ which are defined on $X$ and satisfy (\ref{cond2}). The family of all such extensions is  $\cA_{[-d,f|_S,M]}$ and the optimal extensions are $\alpha_{[-d,f|_S,M]}$ and $\gamma_{[-d,f|_S,M]}$, respectively. 
\end{theorem}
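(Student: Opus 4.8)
The plan is to recast the statement in the language of $(-d)$-convexity, with coupling function $c:=-d$ on $X\times X$, and then to quote Theorem~\ref{main}. Everything hinges on a short dictionary: (i) a proper function $h:X\to\RX$ is $(-d)$-convex if and only if it is a real-valued $1$-Lipschitz function on all of $X$; and (ii) for such an $h$ and any mapping $M$ with $G(M)\subset X\times X$, one has $G(M)\subset G(\partial_{-d}h)$ if and only if $h(y)-h(x)=d(x,y)$ for every $(x,y)\in G(M)$, i.e.\ precisely \eqref{cond2}. Both are elementary: for any $1$-Lipschitz $h$ the supremum defining $h^{-d}(y)$ is attained at $x=y$, so $h^{-d}=-h$; substituting this into the equality $h(x)+h^{-d}(y)=-d(x,y)$ that characterizes $G(\partial_{-d}h)$ yields (ii), and it also shows $h=(-h)^{-d}$ is $(-d)$-convex, while conversely a proper $(-d)$-convex function is a pointwise supremum of the $1$-Lipschitz functions $x\mapsto -d(x,y)-g(y)$ and hence (being finite somewhere) is finite and $1$-Lipschitz everywhere, giving (i).

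Next I would manufacture a $(-d)$-antiderivative of $M$ that agrees with $f$ on $S$, so that Theorem~\ref{main} applies; crucially, this antiderivative need not be $(-d)$-convex, which is exactly why Theorem~\ref{main} was stated for $c$-antiderivatives rather than for $c$-convex functions. Following the device in the proof of Lemma~\ref{new restriction lemma}, I set $\tilde f:X\to\RX$ equal to $f$ on $\dom(M)$ and equal to $\pinf$ off $\dom(M)$. The one computation with content is to check, directly from the definition of $\partial_{-d}$, that $\tilde f$ is a $(-d)$-antiderivative of $M$: for $(x,y)\in G(M)$ and arbitrary $x'\in X$ the required inequality $\tilde f(x)-d(x',y)\le \tilde f(x')-d(x,y)$ is trivial when $x'\notin\dom(M)$ (the right side is $\pinf$), and when $x'\in\dom(M)$ it follows from $f(x)-f(x')=(f(x)-f(y))+(f(y)-f(x'))$ together with \eqref{cond2} applied to $(x,y)$ and the $1$-Lipschitz property of $f$ on $A$ applied to $y,x'$. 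Since $\dom(\tilde f)=\dom(M)\supset S\ne\emptyset$ and $\tilde f|_S=f|_S$, Theorem~\ref{main} (with $c=-d$ and $\tilde f$ in the role of $f$) yields that $\cA_{[-d,f|_S,M]}$ is nonempty and contains both its lower and upper envelopes $\alpha_{[-d,f|_S,M]}$ and $\gamma_{[-d,f|_S,M]}$.

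Finally the conclusion is read off through the dictionary: $h\in\cA_{[-d,f|_S,M]}$ means $h$ is $(-d)$-convex, $G(M)\subset G(\partial_{-d}h)$ and $h|_S=f|_S$, which by (i) and (ii) says exactly that $h$ is a $1$-Lipschitz function on $X$ extending $f|_S$ and satisfying \eqref{cond2}. Thus $\cA_{[-d,f|_S,M]}$ is precisely the family of all such extensions; its nonemptiness is the existence assertion, and $\alpha_{[-d,f|_S,M]},\gamma_{[-d,f|_S,M]}$, being members of the family and its pointwise infimum and supremum, are the minimal and maximal extensions. I expect the main obstacle to be nothing deep but rather the care needed to see that the verification that $\tilde f$ is a $(-d)$-antiderivative of $M$ uses only the $1$-Lipschitzness of $f$ on $A$ and \eqref{cond2} — recall that $f$ is given only on the subset $A$ and $\tilde f$ is manifestly not $(-d)$-convex — and, if explicit formulas for the envelopes are wanted, to combine Theorem~\ref{mainformula} with \eqref{conjugamma}, as the following more detailed version of the result will do.
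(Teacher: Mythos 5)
Your proposal is correct and follows essentially the same route the paper indicates (via \cite{BR2}): translate through Proposition~\ref{lipconv} between $1$-Lipschitz functions with the distance-preservation condition \eqref{cond2} and $-d$-convex $-d$-antiderivatives of $M$, and then invoke Theorem~\ref{main} (and \ref{mainformula}) with $c=-d$, using the $f+\iota_{\dom(M)}$ device of Lemma~\ref{new restriction lemma} to supply the required (not necessarily $-d$-convex) antiderivative agreeing with $f$ on $S$. Your verification that this extended function is a $-d$-antiderivative, using only \eqref{cond2} and the $1$-Lipschitz property of $f$ on $A$, is exactly the point the paper emphasizes, so no gap remains.
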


The proof in \cite{BR2} of the above result does not rely on the McShane and Whitney extensions and these follow as the most particular case, as presented in more detail below. We see that $M$ is the part of the optimal plan which we keep fixed and $S$ is the set of points with fixed prices. However, as we remarked above, we will not use these interpretations and will just treat these as constraints for our Lipschitz extensions to comply with. The underlying nature of $-d$-convexity that allows our discussion of Lipschitz functions is rooted in the following equivalences, which can be found along with a proof and some historical remarks in \cite{BR2}.

\begin{proposition}\label{lipconv}
Let $(X,d)$ be a metric space. For any proper function $f:X\to\RX$, the following assertions are equivalent:\\
\\
(1) $f$ is $1$-Lipschitz;\\
(2) $f^{-d}=-f$;\\
(3) $f$ is $-d$-convex;\\
(4) $f$ is a $-d$-antiderivative of the identity $I:X\to X$. That is, $G(I)\subset G(\partial_{-d} f)$.\\
\\
In this case, it follows that $f:X\to\RR$. The graph of the $-d$-subdifferential of $f$ is the set of ordered pairs $(x,y)\in X\times X$ such that $f$ preserves the distance $d(x,y)$ in the sense that $f(y)-f(x)=d(x,y)$.
\end{proposition}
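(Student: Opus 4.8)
The plan is to establish the cycle of implications $(1)\Rightarrow(2)\Rightarrow(3)\Rightarrow(4)\Rightarrow(1)$, together with the finiteness claim and the final description of $G(\partial_{-d}f)$. First I would record the governing inequality: for the coupling $-d$, the Young--Fenchel inequality \eqref{} from the preliminaries reads $-d(x,y)\le f(x)+f^{-d}(y)$, i.e. $f^{-d}(y)\ge \sup_{x}\big[-d(x,y)-f(x)\big]$, which is just the definition of $f^{-d}$; so the content of $(2)$ is the two inequalities $-f(y)\le f^{-d}(y)$ and $f^{-d}(y)\le -f(y)$ for all $y$. The first is immediate by taking $x=y$ in the supremum (using $d(y,y)=0$). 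For $(1)\Rightarrow(2)$ it then remains to show $f^{-d}(y)\le -f(y)$, i.e. $-d(x,y)-f(x)\le -f(y)$ for all $x$, which rearranges to $f(y)-f(x)\le d(x,y)$ — exactly $1$-Lipschitzness (the symmetric bound $f(x)-f(y)\le d(x,y)$ holds by symmetry of $d$). This also shows $f$ is real-valued wherever it is proper, since $f(x)\le f(x_0)+d(x,x_0)<\infty$ for a point $x_0\in\dom(f)$, and symmetrically $f(x)>-\infty$.

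Next, $(2)\Rightarrow(3)$ is essentially definitional: if $f^{-d}=-f$ then, writing $g:=-f:Y\to\RX$ (which is proper), we have $g^{-d}(x)=\sup_y[-d(x,y)-g(y)]=\sup_y[-d(x,y)+f(y)]=\big(f^{-d}\big)(x)$... — more directly, $(f^{-d})^{-d}=f$ means $f=(g)^{-d}$ with $g=f^{-d}$, so $f$ is $-d$-convex by Definition \ref{cconvexity}. (Concretely one checks $f=(f^{-d})^{-d}$: the inequality $f\ge (f^{-d})^{-d}$ is the standard $f\ge f^{cc}$ valid for any coupling, and $f\le(f^{-d})^{-d}$ follows from $f^{-d}=-f$ by applying the transform once more.) For $(3)\Rightarrow(4)$: if $f$ is $-d$-convex then $f=g^{-d}$ for some proper $g$, hence $f^{-d}\le g^{-d-d}\le\dots$; cleaner is to observe that any $-d$-convex $f$ satisfies $f=f^{-d-d}$, so $f^{-d}=f^{-d-d-d}$, and combined with $f^{-d}\ge -f$ (shown above, always true) and the reverse obtained from $-d$-convexity one gets $f^{-d}=-f$ again, i.e. $f(x)+f^{-d}(x)=0=-d(x,x)$, which by the second equality in Definition \ref{ctrans} says exactly $x\in\partial_{-d}f(x)$; thus $G(I)\subset G(\partial_{-d}f)$.

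Finally $(4)\Rightarrow(1)$: if $x\in\partial_{-d}f(x)$ for every $x$, then using the first characterization of the $c$-subdifferential in Definition \ref{ctrans} with $c=-d$, the pair $(x,x)\in G(\partial_{-d}f)$ gives, for every $x'\in X$, the inequality $f(x)-d(x',x)\le f(x')-d(x,x)=f(x')$, i.e. $f(x)-f(x')\le d(x,x')$; swapping the roles of $x,x'$ yields $1$-Lipschitzness. The same computation, run with a general pair $(x,y)\in G(\partial_{-d}f)$, gives $f(x)+f^{-d}(y)=-d(x,y)$; since $f^{-d}=-f$ (now established), this is $f(x)-f(y)=-d(x,y)$, i.e. $f(y)-f(x)=d(x,y)$, and conversely whenever $f(y)-f(x)=d(x,y)$ the equality case of Young--Fenchel forces $(x,y)\in G(\partial_{-d}f)$; this proves the last assertion. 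I expect no serious obstacle here — the only point requiring mild care is bookkeeping the two always-true inequalities ($f\ge f^{-d-d}$ and $f^{-d}(y)\ge -f(y)$) versus the ones that genuinely use the hypothesis, so that each implication uses exactly what is available; everything else is symmetry of $d$ and the definition of the transform.
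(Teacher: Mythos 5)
The paper does not actually prove Proposition \ref{lipconv} here (it defers to \cite{BR2}), so your argument has to stand on its own. Most of it does: $(1)\Rightarrow(2)$, the parenthetical verification in $(2)\Rightarrow(3)$ (the earlier displayed identity ``$g^{-d}(x)=(f^{-d})(x)$'' is false, but you abandon it), $(4)\Rightarrow(1)$, the real-valuedness of $f$, and the final description of $G(\partial_{-d}f)$ are all correct and use only $d(x,x)=0$ and symmetry of $d$.

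The genuine gap is in $(3)\Rightarrow(4)$. There you need the inequality $f^{-d}\le -f$, equivalently $f(y)-f(x)\le d(x,y)$ for all $x,y$, and you obtain it from ``the reverse obtained from $-d$-convexity'' without any argument. The identities you do invoke --- $f=f^{-d-d}$ for $-d$-convex $f$, $f^{-d}=f^{-d-d-d}$, and $f^{-d}\ge-f$ --- are valid for an arbitrary coupling $c$ (respectively valid whenever $c(x,x)=0$), and for a general coupling $c$-convexity certainly does not force $f^{c}=-f$ (classical convexity with $c(x,y)=\langle x,y\rangle$ is a counterexample), so no formal manipulation of those identities can produce the missing inequality. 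This is exactly the step where the triangle inequality must enter, and it never appears in your write-up. The standard fix: if $f=g^{-d}$, then $f=\sup_{t}\,\big[-d(\cdot,t)-g(t)\big]$ is a pointwise supremum of $1$-Lipschitz functions (each $-d(\cdot,t)$ is $1$-Lipschitz by the triangle inequality together with symmetry), and since $f$ is proper the Lipschitz bound propagates finiteness, so $f$ is real-valued and $1$-Lipschitz; this gives $(3)\Rightarrow(1)$, hence $f^{-d}=-f$ by your $(1)\Rightarrow(2)$, and then $f(x)+f^{-d}(x)=0=-d(x,x)$ yields $x\in\partial_{-d}f(x)$, i.e.\ $(4)$, exactly as you conclude. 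With that insertion the cycle closes; note also that, consistently with the paper's remark following the proposition, the repaired argument never uses that $d(x,y)=0$ implies $x=y$, so it covers pseudometrics as well.
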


The proof of Proposition \ref{lipconv} does not rely on the fact that $d(x,y)=0$ only if $x=y$. Thus, Proposition \ref{lipconv} also holds for a pseudometric $d$. The fact that the identity mapping is $-d$-cyclically monotone is an immediate consequence of the properties of the metric $d$. It follows that the identity mapping is the most trivial $-d$-subdifferential of $-d$-convex functions. Having Proposition \ref{lipconv} at hand and combining it with Theorems \ref{main} and \ref{mainformula}, we were able to prove in \cite{BR2} the following main example of optimal $c$-convex $c$-antiderivatives. To this end, we reformulate the hypotheses of Theorem \ref{lipext1}. Both in Theorem \ref{lipext1} and in Theorem \ref{Lipext}, the hypotheses provide us with a $-d$ antiderivative $f$ of the mapping $M:X\rightrightarrows X$ and a nonempty subset $S$ of $\dom (M)$. Therefore the existence of the optimal extensions of $f|_S$ which are $-d$-convex $-d$-antiderivatives of $M$ and their formulae hold in both cases. Unlike in Theorem \ref{lipext1}, in Theorem \ref{Lipext} the reformulation does not require $f$ to be $1$-Lipschitz outside $\dom (M)$. However, it is clear that any $1$-Lipschitz extension of $f|_S$ that satisfies these hypotheses is also uniquely determined on $M(S)$ by the equality
\begin{align}
f(t)-f(s)=d(s,t)\ \ \ \  \mathrm{for\ every}\ s\in S\ \ \mathrm{and}\ \ t\in M(s).\nonumber\\
\nonumber
\end{align}

\begin{theorem}\label{Lipext}
Let $(X,d)$ be a metric space. Let $M:\dom(M)\subset X\rightrightarrows X$ and $f:\dom(M)\to\RR$ satisfy
\begin{equation}\label{dsubM}
f(x)-f(x')\leq d(x',y)-d(x,y)\ \ \ \ for\ all\ \ (x,y)\in G(M)\ \ and\ \ x'\in\dom(M).
\end{equation}
Let $\emptyset\neq S\subset\dom(M)$. Then $f|_S$ extends to a $1$-Lipschitz function $h:X\to\RR$ which satisfies
\begin{equation}\label{dsub}
h(x)-h(x')\leq d(x',y)-d(x,y)\ \ \ \ for\ all\ \ (x,y)\in G(M)\ \ and\ \ x'\in X.
\end{equation}
The set $\cA_{[-d,f|_S,M]}$ of all such extensions $h$ of $f|_S$ is convex. In particular, the function $\alpha_{[-d,f|_S,M]}:X\to\RR$ defined by
\begin{equation}\label{minfSM}
\alpha_{[-d,f|_S,M]}(x)=\sup_{\begin{array}{c}
                       s\in S,\\
                       n\in\mathbb{N},\ x_1=s,\\
                       \{(x_i,y_i)\}_{i=1}^n\subset G(M)\\
                     \end{array} }
f(s)+\sum_{i=1}^{n-1}[ d(x_i,y_i)-d(x_{i+1},y_i)]+d( x_n,y_n)-d(x,y_n)
\end{equation}
is the minimal $1$-Lipschitz function that agrees with $f$ on $S$ and satisfies \eqref{dsub}. The function $\gamma_{[-d,f|_S,M]}:X\to\RR$ defined by
\begin{equation}\label{maxfSM}
\gamma_{[-d,f|_S,M]}(x)=\inf_{\begin{array}{c}
                       s\in S,\\
                       n\in\mathbb{N},\ x_1=s,\\
                       \{(x_i,y_i)\}_{i=1}^n\subset G(M)\\
                     \end{array} }
f(s)+\sum_{i=1}^{n-1} [d(x_i,y_{i+1})-d(x_{i+1},y_{i+1})]+d(x_n,x)
\end{equation}
is the maximal $1$-Lipschitz function that agrees with $f$ on $S$ and satisfies \eqref{dsub}. If $S=\dom(M)$, then
\begin{equation}\label{minfM}
\alpha_{[-d,f|_{\mathrm{dom}(M)},M]}(x)=\sup_{(s,t)\in G(M)}\ [f(s)+d(s,t)-d(x,t)]
\end{equation}
and
\begin{equation}\label{maxfM}
\gamma_{[-d,f|_{\mathrm{dom}(M)},M]}(x)=\inf_{s\in\dom (M)}\ [f(s)+d(x,s)],\ \ \ \ x\in X.
\end{equation}
In particular, suppose that $f:S\subset X\to X$ is $1$-Lipschitz. Then $M=I_S$, where $I_S$ is the identity mapping on $S$,  satisfies \eqref{dsubM}. Consequently $\mathrm{(McShane,\ Whitney)}$,
\begin{equation}\label{minmaxfSI}
\alpha_{[-d,f|_S,I_S]}(x)=\sup_{s\in S}\ [f(s)-d(x,s)]\ \ \ \ \ \ \ and\ \ \ \ \ \ \ \gamma_{[-d,f|_S,I_S]}(x)=\inf_{s\in S}\ [f(s)+d(x,s)]
\end{equation}
are the minimal and maximal $1$-Lipschitz extensions of $f$, respectively.
\end{theorem}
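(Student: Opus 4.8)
The plan is to read Theorem \ref{Lipext} off Theorems \ref{main} and \ref{mainformula} applied with the coupling function $c=-d$, using Proposition \ref{lipconv} as the dictionary that turns $-d$-convexity into $1$-Lipschitzness. First I would extend $f$ to all of $X$ by setting $\tilde f:=f+\iota_{\dom(M)}$ (so $\tilde f=f$ on $\dom(M)$ and $\tilde f=+\infty$ off it) and view $M$ as a mapping $X\rightrightarrows X$. A direct check against the definition of $\partial_{-d}$ shows that hypothesis \eqref{dsubM} says exactly that $\tilde f$ is a $-d$-antiderivative of $M$: for a test point $x'\notin\dom(M)$ the inequality $\tilde f(x)-\tilde f(x')\le d(x',y)-d(x,y)$ is automatic, and for $x'\in\dom(M)$ it is \eqref{dsubM}. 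Since $\emptyset\neq S\subset\dom(M)$, Theorems \ref{main} and \ref{mainformula} apply to $(-d,\tilde f,S)$, and the family $\cA_{[-d,f|_S,M]}$ (which coincides with $\cA_{[-d,\tilde f|_S,M]}$, since only $f|_S=\tilde f|_S$ matters) is nonempty and contains its lower and upper envelopes $\alpha_{[-d,f|_S,M]}$ and $\gamma_{[-d,f|_S,M]}$, both $-d$-convex.

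Next I would translate the conclusions. By Proposition \ref{lipconv} a function $X\to\RX$ is $-d$-convex precisely when it is $1$-Lipschitz, in which case it is real-valued; and $G(M)\subset G(\partial_{-d}h)$ unwinds, straight from the definition of the $-d$-subdifferential, into the inequality \eqref{dsub}. Hence $\cA_{[-d,f|_S,M]}$ is exactly the set of $1$-Lipschitz extensions $h\colon X\to\RR$ of $f|_S$ satisfying \eqref{dsub}. Its nonemptiness is the desired extension statement; its convexity is clear because all three constraints---being $1$-Lipschitz, $h|_S=f|_S$, and \eqref{dsub}---are affine in $h$ with right-hand sides not depending on $h$. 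That $\alpha_{[-d,f|_S,M]}$ and $\gamma_{[-d,f|_S,M]}$ are the minimal and maximal such extensions is the envelope assertion of Theorem \ref{main}, and they are real-valued by Proposition \ref{lipconv}.

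For the explicit formulae, \eqref{minfSM} is immediate from \eqref{alpha} of Theorem \ref{mainformula}: one writes Rockafellar's function as $R_{[-d,M,s]}(x)=\sup\sum_{i=1}^{n}[d(x_i,y_i)-d(x_{i+1},y_i)]$ over chains $\{(x_i,y_i)\}_{i=1}^n\subset G(M)$ with $x_1=s$, $x_{n+1}=x$, separates off the last summand using $x_{n+1}=x$, adds $f(s)$, and supremizes over $s\in S$. For \eqref{maxfSM} I would use the duality \eqref{conjugamma}, which makes $\gamma_{[-d,f|_S,M]}$ the $-d$-transform of $\alpha_{[-d,\tilde f^{-d}|_{M(S)},M^{-1}]}$; expanding the latter via \eqref{alpha} for $M^{-1}$, using the symmetry $d(a,b)=d(b,a)$ to recast its Rockafellar terms, and then collapsing the resulting nested supremum/infimum gives \eqref{maxfSM}. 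A cleaner alternative is to verify directly that the function $\Phi$ given by the right-hand side of \eqref{maxfSM} lies in $\cA_{[-d,f|_S,M]}$---it is $1$-Lipschitz and satisfies \eqref{dsub} by appending pairs to chains and using the triangle inequality, and it agrees with $f$ on $S$ via \eqref{dsubM}---while every $h\in\cA_{[-d,f|_S,M]}$ satisfies $h\le\Phi$ by iterating \eqref{dsub} along each chain and closing with $h(x)\le h(x_n)+d(x_n,x)$; thus $\Phi=\gamma_{[-d,f|_S,M]}$. The case $S=\dom(M)$, i.e.\ \eqref{minfM} and \eqref{maxfM}, then follows either by specializing \eqref{minfSM}, \eqref{maxfSM} and collapsing chains to length one via \eqref{dsubM}, or directly from \eqref{conjufulldom} and \eqref{gammafulldom} of Theorem \ref{main}. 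Finally, for $1$-Lipschitz $f\colon S\to\RR$ one checks that $M=I_S$ satisfies \eqref{dsubM} and that \eqref{dsub} becomes vacuous for $1$-Lipschitz $h$, so $\cA_{[-d,f|_S,I_S]}$ is the family of all $1$-Lipschitz extensions of $f$, and $M=I_S$ in \eqref{minfM}, \eqref{maxfM} recovers the McShane--Whitney envelopes \eqref{minmaxfSI}.

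The only step I expect to be more than routine is \eqref{maxfSM}: passing from the nested $\sup$--$\inf$ produced by the duality \eqref{conjugamma} to the single infimum in \eqref{maxfSM}---equivalently, the hands-on verification that $\Phi$ is $1$-Lipschitz and obeys \eqref{dsub}---is where the chain bookkeeping and triangle inequalities have to be organized carefully. Everything else is an unwinding of definitions under the Proposition \ref{lipconv} dictionary.
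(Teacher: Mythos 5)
Your proposal is correct and follows essentially the route the paper itself indicates (and defers to [BR2] for details): extend $f$ by $+\infty$ off $\dom(M)$ so that \eqref{dsubM} makes it a $-d$-antiderivative of $M$, apply Theorems \ref{main} and \ref{mainformula} with $c=-d$, translate via the Proposition \ref{lipconv} dictionary, obtain \eqref{minfSM} from \eqref{alpha}, and obtain \eqref{maxfSM} via \eqref{conjugamma} (or the equivalent direct chain/triangle-inequality verification), with \eqref{minfM}--\eqref{maxfM} and McShane--Whitney as specializations. Your checks at the delicate spots (the automatic inequality at $x'\notin\dom(M)$, the collapse of the dual formula for $\gamma$, and the vacuity of \eqref{dsub} when $M=I_S$) are the right ones, so no gap remains.
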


For more details regarding this example, the reader is, once again, referred to \cite{BR2}.

\section*{Acknowledgments}
This research was supported in part by the Israel Science Foundation (Grant 389/12), the Fund for the Promotion of Research at the Technion
 and by the Technion General Research Fund.
\ \\
\ \\

\end{document}